\documentclass[12pt,oneside]{amsart}
\usepackage{amssymb, amscd, amsmath, amsthm, latexsym, hyperref}
\usepackage{pb-diagram, fancyhdr, graphicx, psfrag}
 \usepackage[text={6.3in,8.5in},centering,letterpaper,dvips]{geometry}
\newcommand{\compactlist}{\begin{list}{$\bullet$}{\setlength{\leftmargin}{1em}}}

\def\qq{{\bf Q}}

\def\calc{\mathcal{C}}

\def\cs{\mathbin{\#}}
\DeclareMathOperator{\cfk}{\it CFK}
\newcommand{\spinc}{\ifmmode{{\mathfrak s}}\else{${\mathfrak s}$\ }\fi}
\newcommand{\spinct}{\ifmmode{{\mathfrak t}}\else{${\mathfrak t}$\ }\fi}

\newcommand{\fig}[2] { \includegraphics[scale=#1]{#2} }
\def\U{\Upsilon}


\newtheorem*{utheorem}{Theorem}
\newtheorem{theorem}{Theorem}
\newtheorem{lemma}[theorem]{Lemma}

\theoremstyle{definition}
\newtheorem{definition}[theorem]{Definition}


\begin{document}
\title{The four-genus of connected sums of torus knots}
\author{Charles Livingston and Cornelia A. Van Cott}
\thanks{This work was supported in part by the Simons Foundation and a grant from the National Science Foundation.}

\address{ }
\email{}
\address{Charles Livingston: Department of Mathematics, Indiana University, Bloomington, IN 47405 }
\email{livingst@indiana.edu}
\address{Cornelia A. Van Cott: Department of Mathematics, University of San Francisco, San Francisco, CA  94117}
\email{cvancott@usfca.edu}


\begin{abstract} We study the four-genus of linear combinations of torus knots:  $g_4(aT(p,q) \cs -bT(p',q'))$.  Fixing positive $p,q,p',$ and $ q'$, our  focus is on the behavior of the  four-genus as a function of positive $a$ and $b$.  Three types of examples are presented: in the first, for all  $a$ and $b$  the four-genus is completely determined by the Tristram-Levine signature function; for the second,  the recently defined Upsilon function of Ozsv\'ath-Stipsicz-Szab\'o determines the four-genus for all $a$ and $b$; for the third,   a  surprising interplay between signatures and Upsilon appears.   

 \end{abstract}

\maketitle

\section{Introduction} 

The four-genus of torus knots was determined in Kronheimer-Mrowka's resolution of  the Milnor and Thom Conjectures~\cite{km1}; in brief, $g_4(T(p,q)) = g_3(T(p,q)) = (p-1)(q-1)/2$.  The $\tau$--invariant of Ozsv\'ath-Szab\'o~\cite{os1} and the $s$--invariant of Rasmussen~\cite{ras} provided alternative approaches to the study of the four-genus of knots. Both  offer an immediate generalization: for any collection of {\it positive} torus knots, $g_4(\cs T_i) = \sum g_3(T_i)$.

In contrast to these results, the   four-genus of the differences of   positive torus knots, $g_4( T(p,q) \cs - T(p',q'))$, is largely unknown, even though it arises naturally in classical knot theory, for instance in studying unknotting sequences of knots and the Gordian distance between knots.   This problem of determining this four-genus also appears in the study of deformations of algebraic curves and in determining the minimal cobordism distance between torus knots~\cite{ baader1, baader2, borodzik-hedden, borodzik-livingston,  feller1, feller2, owens-strle}.

Here we will  consider a more general problem, determining  $g_4( aT(p,q) \cs -bT(p',q'))$;  we will always restrict our attention to the open case, in which all the parameters are positive.  Our principal goal is to explore the complementary nature of two of the strongest invariants that bound the four-genus: the classical Tristram-Levine signature function, $\sigma_K(t)$, defined in~\cite{levine, tristram}, and the Ozsv\'ath-Stipsicz-Szab\'o Upsilon invariant, $\U_K(t)$, defined in~\cite{oss}.  (Note that the signature function is  determined by the Milnor signatures~\cite{milnor1}, and the Upsilon bounds are determined by Heegaard-Floer bounds discovered by Hom and Wu~\cite{hom-wu}.  The Upsilon function generalized the $\tau$--invariant: $\tau(K) = \U_K(t)/t $ for small $t$.)

We give several positive results in which either $\sigma_K(t)$ or $\U_K(t)$ singlehandedly determines the four-genus of a subfamily of knots of the form $aT(p,q) \cs -bT(p',q')$, and we also give some results where neither of the two invariants alone determines the four-genus, but together they are sufficient. Finally, we identify large families of such differences of torus knots for which the determination of the four-genus is inaccessible with only these two invariants.  Our approach offers a new perspective from which to view the limits of current techniques and to identify further  challenging problems. 

Three theorems illustrate the nature of our results.  The first is an unpublished theorem of Litherland which shows the strength of signatures; we prove this result in Section~\ref{section:litherlandthem}, along the lines of Litherland's proof.  The second is a theorem which uses only the Upsilon invariant; this is proved in Section~\ref{section-purely-upsilon}.  The third theorem uses both the signature and Upsilon invariants; this result is proved in Section~\ref{mixed-example}.

\begin{theorem}\label{thm:litherland}  Let $K = aT(2,2k+1)\cs  -bT(2,2j+1)$.  Then 
$$g_4(K)  = \max_{t\in[0,1]}( |\sigma_K(t)| /2).$$
\end{theorem}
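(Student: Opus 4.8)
The plan is to establish the two inequalities separately; the bound $g_4(K)\ge \max_t(|\sigma_K(t)|/2)$ is standard, and the reverse inequality carries all the content. For the lower bound I would invoke the classical fact that the Tristram--Levine signature bounds the four-genus: whenever $e^{2\pi i t}$ is not a root of the Alexander polynomial one has $|\sigma_K(t)|\le 2g_4(K)$. Since $\sigma_K$ is a step function whose jumps occur only at such roots, its supremum is realized on the open plateaus, and the lower bound follows. Additivity of the signature under connected sum together with $\sigma_{-J}=-\sigma_J$ gives $\sigma_K(t)=a\,\sigma_{T(2,2k+1)}(t)-b\,\sigma_{T(2,2j+1)}(t)$, so the whole problem is governed by the two explicitly known step functions.

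Next I would make the quantity $M:=\max_t(|\sigma_K(t)|/2)$ combinatorially explicit. The function $\sigma_{T(2,2k+1)}(t)$ jumps by $-2$ at $t=(2m-1)/(2(2k+1))$ for $m=1,\dots,k$ and is symmetric about $t=1/2$; writing $n^{(k)}(t)$ and $n^{(j)}(t)$ for the number of jumps of the two factors already passed at parameter $t\in(0,1/2]$, every plateau contributes the value $|a\,n^{(k)}(t)-b\,n^{(j)}(t)|$, so that
\[
M=\max_{t\in(0,1/2]}\big|\,a\,n^{(k)}(t)-b\,n^{(j)}(t)\,\big|.
\]
Because the factor with the larger parameter jumps more densely, the reachable pairs $(n^{(k)},n^{(j)})$ trace out a lattice staircase and $M$ is the maximum of finitely many linear forms in $(a,b)$. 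Assuming without loss of generality $k\ge j$, one finds that when $a\ge b$ the maximum sits at $t=1/2$ and equals $ak-bj$, whereas when $a<b$ it may be attained at an interior plateau.

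For the upper bound I would assemble surfaces from two building blocks: changing one crossing in the standard diagram of $T(2,2i+1)$ yields $T(2,2i-1)$, and since a crossing change is realized by a genus-one cobordism, $T(2,2i+1)\cs -T(2,2i-1)$ bounds a genus-one surface in $B^4$; and $T(2,n)\cs -T(2,n)$ is slice. Concatenating the first building block gives a genus-$(k-j)$ cobordism from $T(2,2k+1)$ to $T(2,2j+1)$; when $a\ge b$ one uses $b$ copies of it to join the $b$ factors $-T(2,2j+1)$ to $b$ of the factors $T(2,2k+1)$ and caps the remaining $a-b$ copies of $T(2,2k+1)$ with genus-$k$ surfaces, for a total of $b(k-j)+(a-b)k=ak-bj=M$. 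The essential difficulty is the regime $a<b$, where the naive subadditive estimate $g_4(K)\le\sum g_4(\text{summands})$ strictly exceeds $M$ (already for $T(2,5)\cs -2T(2,3)$ it gives $2$ while $M=1$). Here one cannot treat the summands independently: the surface must be built from band moves that join strands of different connect-summands, geometrically cancelling a positive signature jump of one factor against a negative jump of another, and reducing each torus-knot factor only as far as the maximizing plateau $(n^{(k)},n^{(j)})$ dictates before pairing the remnants slice-wise.

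The hard part will be proving that such a configuration of bands exists and realizes genus exactly $M$, rather than merely bounding it from above by a larger count. I would organize this as an induction on $a+b$: at each step peel off either a slice pair or a single genus-one block, verify that $M$ decreases by precisely the genus expended, and apply the inductive hypothesis to the smaller knot. The delicate point---and the crux of Litherland's argument---is the bookkeeping that forces this reduction to be sharp in every parameter regime simultaneously, i.e.\ that the plateau maximizing $|a\,n^{(k)}-b\,n^{(j)}|$ can always be decremented in lockstep with an honest, genus-efficient band move.
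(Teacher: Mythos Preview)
Your lower bound and the easy regime are fine and match the paper's treatment. The gap is in the ``hard'' regime. Your proposed induction on $a+b$ does not close: peeling off a single genus-one block turns one summand $T(2,2k+1)$ into $T(2,2k-1)$, so the resulting knot is no longer of the form $a'T(2,2k'+1)\cs-b'T(2,2j'+1)$ and the inductive hypothesis does not apply; and peeling off a slice pair $T(2,n)\cs-T(2,n)$ is available only when the two torus parameters already agree. So neither move you list gives a reduction that stays in the class while decreasing $a+b$. Relatedly, your case split $a\ge b$ versus $a<b$ is coarser than the actual threshold: the Murasugi value $ak-bj$ continues to be the maximum (and is realizable) well past $a=b$, up to the line $b=qa$ where $q=\lfloor (2k+1)/(2j+1)\rfloor$.

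The paper's argument is a different recursion. Writing the odd parameters as $k>l$ and $k=ql+r$, it performs $qa$ band moves, each joining a copy of $T(2,k)$ to a copy of $-T(2,l)$, converting $aT(2,k)\cs-bT(2,l)$ in one stroke to $(b-qa)T(2,l)\cs-aT(2,r)$; this is again of the required shape, with the torus parameters reduced Euclidean-style. On the signature side one proves the matching inequality $\mu(a,k,b,l)\ge \mu(b-qa,l,a,r)+qa$ by an explicit identity $-\sigma_{a,k,b,l}(j/l)=\sigma_{b-qa,l,a,r}(j/l)+qa$ at jump points $j/l$, together with a companion identity at carefully chosen points of $X_k$. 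Two technical points you would also need: the recursion forces one of the parameters to become even, so the statement must be extended to torus \emph{links} and phrased via the first Betti number $\beta$ rather than genus; and the base cases are $b\le qa$ (Murasugi signature at $t=1$) and $r=0$ (signature at $t=(l-2)/l$), not $a\ge b$. Your intuition that bands must join different connect-summands is exactly right, but the organizing principle is the Euclidean algorithm on $(k,l)$ with the coupled substitution $(a,b)\mapsto(b-qa,a)$, not a one-at-a-time induction on $a+b$.
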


\begin{theorem}\label{theorem:pqr} Let $K = aT(p,qr)\cs  -bT(q,pr)$ with    $p<q$ and $r<\frac{q}{q-p}$.   Then 

$$ g_4(K) =  
\begin{cases}
| \tau(K)  |      & \mbox{\ if\ } a \le b \\
  \frac{p}{2} | \U_K(\frac{2}{p})|   & \mbox{\ if\ } a \ge b.  
 \end{cases}
   $$
   Furthermore, if $a= b$, then $  \tau(K) = -  \frac{p}{2} | \U_K(\frac{2}{p})|$; if $a >b$, then $  |\tau(K)| <  \frac{p}{2} | \U_K(\frac{2}{p})|$.\end{theorem}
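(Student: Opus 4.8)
The plan is to trap $g_4(K)$ between two lower bounds, coming from $\tau$ and from $\U$, and a matching upper bound coming from a single explicit cobordism, and then to extract the comparison statements from the resulting arithmetic. Throughout write $g_1=g_3(T(p,qr))=\frac{(p-1)(qr-1)}{2}$ and $g_2=g_3(T(q,pr))=\frac{(q-1)(pr-1)}{2}$; a one-line computation gives $g_2-g_1=\frac{(q-p)(r-1)}{2}\ge 0$. Since $\tau$ is additive and $\tau(T(a,b))=g_3(T(a,b))$ for positive torus knots, $\tau(K)=a g_1-b g_2$; likewise $\U$ is additive with $\U_{-L}=-\U_L$, so $\U_K(t)=a\,\U_{T(p,qr)}(t)-b\,\U_{T(q,pr)}(t)$. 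The two genus bounds I invoke are $g_4(K)\ge|\tau(K)|$ and, since $p\ge 2$ forces $2/p\in[0,1]$, the Hom--Wu/Ozsv\'ath--Stipsicz--Szab\'o bound $g_4(K)\ge\frac{p}{2}|\U_K(2/p)|$.

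The first genuine task is to evaluate the two torus-knot Upsilon functions at $t=2/p$. For $T(p,qr)$ the smaller parameter is $p$, so $2/p$ is exactly the first singularity of $\U_{T(p,qr)}$; on $[0,2/p]$ the function is linear of slope $g_1$, giving $\U_{T(p,qr)}(2/p)=\frac{2g_1}{p}=\frac{(p-1)(qr-1)}{p}$. For $T(q,pr)$ the point $2/p$ lies strictly beyond the first singularity $2/q$, so I must descend the staircase: writing $\U_{T(q,pr)}(2/p)=-2\min_v\big(i(v)+\tfrac1p A(v)\big)$ over the cycle (outer-corner) generators $v$ of the staircase complex, the minimizing generator is the first at which the running inequality $(p-1)(\text{horizontal step})\ge(\text{vertical step})$ holds. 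I expect the hypothesis $r<\frac{q}{q-p}$ to be precisely the condition placing this minimizer at the Alexander grading for which the value collapses to $\U_{T(q,pr)}(2/p)=\frac{2g_1}{p}-\frac{(q-p)(r-1)}{p}$; I have checked this in the cases $(p,q,r)=(3,5,2)$ and $(4,5,3)$. Feeding both values into the additive formula yields $\frac{p}{2}|\U_K(2/p)|=a g_1+b g_2-2 b g_1$ (which is nonnegative when $a\ge b$).

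For the upper bound the one geometric input is a cobordism of genus $g_2-g_1=\frac{(q-p)(r-1)}{2}$ from $T(q,pr)$ to $T(p,qr)$, equivalently a surface of that genus in $B^4$ bounding $T(p,qr)\cs -T(q,pr)$. Granting this, subadditivity of $g_4$ under $\cs$ and splitting off $\min(a,b)$ paired summands gives $g_4(K)\le a(g_2-g_1)+(b-a)g_2=|\tau(K)|$ when $a\le b$, and $g_4(K)\le b(g_2-g_1)+(a-b)g_1=\frac{p}{2}|\U_K(2/p)|$ when $a\ge b$. The two displayed formulas also settle which bound dominates: their difference is $|\tau(K)|-\frac{p}{2}|\U_K(2/p)|=2g_1(b-a)$, so $|\tau(K)|\ge\frac{p}{2}|\U_K(2/p)|$ exactly when $a\le b$. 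Hence in each regime the relevant lower bound equals the upper bound, proving the piecewise genus formula. The final statements are then pure arithmetic: at $a=b$ both expressions equal $a(g_2-g_1)$ while $\tau(K)=-a(g_2-g_1)<0$, giving $\tau(K)=-\frac{p}{2}|\U_K(2/p)|$; and for $a>b$ the difference $\frac{p}{2}|\U_K(2/p)|-|\tau(K)|$ equals $2b(g_2-g_1)$ or $2g_1(a-b)$ according to the sign of $\tau(K)$, both strictly positive.

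The main obstacle is the cobordism of the third paragraph: producing an explicit genus $\frac{(q-p)(r-1)}{2}$ cobordism between these two torus knots---presumably through a concrete sequence of saddle moves on their positive braid representatives, or via the cobordism-distance techniques of the cited works---and confirming that its genus is exactly $g_2-g_1$. A secondary difficulty is making the staircase evaluation of $\U_{T(q,pr)}(2/p)$ uniform in $r$ and verifying that $r<\frac{q}{q-p}$ is exactly the hypothesis forcing the minimizing generator into the asserted position, since it is this coincidence that makes the Upsilon lower bound meet the cobordism upper bound when $a\ge b$.
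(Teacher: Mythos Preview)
Your overall architecture is exactly the paper's: the same two lower bounds ($\tau$ and $\U$ at $t=2/p$), the same surgery/cobordism upper bound, and the same arithmetic comparison at the end. The two places you flag as unfinished are precisely where the paper supplies the missing ingredients, and both are short once you know where to look.

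For the evaluation of $\U_{T(q,pr)}(2/p)$, the paper does not chase the minimizing staircase generator case by case. Instead it proves a general statement: for any torus knot $T(m,n)$ with $m<n$, writing $n=km+d$ with $0<d<m$, the function $\U_{T(m,n)}(t)$ has first singularity at $2/m$, second singularity at $t_2=4/m$ (if $d\le m/2$) or $t_2=2/d$ (if $d\ge m/2$), and on $[2/m,t_2]$ equals $-\big[\tfrac12(m-1)(n-1)-km\big]t-2k$. Applying this with $(m,n)=(q,pr)$, the hypothesis $r<q/(q-p)$ gives $(r-1)q<pr<rq$, so $k=r-1$ and $d=pr-(r-1)q$; a two-line check then shows $2/p\le t_2$ in either case (if $t_2=4/q$ one needs $q\le 2p$, which follows since otherwise $r<q/(q-p)<2$; if $t_2=2/d$ one needs $d\le p$, immediate from $p<q$). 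Plugging in yields exactly your asserted value. So the hypothesis on $r$ enters not to select a particular minimizer but to force $k=r-1$ and to keep $2/p$ at or below the second singularity.

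For the cobordism, the paper does not build one by hand. It cites Baader's result that the fiber surface of $T(q,pr)$ contains a sub--fiber surface for $T(p,qr)$; the complement is a cobordism of genus $g_2-g_1$ between the two torus knots, and this single citation closes your ``main obstacle.'' The paper phrases the construction slightly differently but equivalently: the canonical Seifert surface for $K$ contains a subsurface bounded by $\min(a,b)\big(T(p,qr)\cs -T(p,qr)\big)$ of genus $2\min(a,b)g_1$, which one excises and caps with a slice disk, giving directly the genera $bg_2-ag_1$ and $ag_1+bg_2-2bg_1$ that you obtained via subadditivity.
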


\begin{theorem}\label{example3438} Let $K = aT(3,4)\cs -bT(3,8)$. Then 
$$g_4(K) =  
\begin{cases} 
\max_{t\in (0,1]}\frac{1}{t}| \U_K(t)|    =7b-3a   &\mbox{if } 0\leq a< 2b \\
\max_{t\in [0,1]}\frac{1}{2}| \sigma_K(t)|   =3a - 5b    & \mbox{if } a \geq  2b.
\end{cases}  $$

\end{theorem}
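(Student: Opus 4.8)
The plan is to prove matching lower and upper bounds for $g_4(K)$ in each of the two regimes, after which the two maximization statements will follow formally from the slice--genus inequalities for $\sigma_K$ and $\U_K$. Observe first that the two formulas agree at the transition $a=2b$, where both return $b$; the geometric source of this is a single genus--one cobordism that drives the entire upper bound. I will use repeatedly that $T(3,4)$ and $T(3,8)$ are positive torus knots, so by the formula recalled in the introduction $g_4(T(3,4))=\tau(T(3,4))=3$ and $g_4(T(3,8))=\tau(T(3,8))=7$, while the classical signatures are $\sigma(T(3,4))=-6$ and $\sigma(T(3,8))=-10$ (the latter computed, for instance, from the Brieskorn lattice--point count for $x^3+y^8+z^2$).

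For the lower bounds I would use additivity of $\tau$ and of $\sigma_K(t)$ under connected sum together with $\tau(-J)=-\tau(J)$ and $\sigma_{-J}(t)=-\sigma_J(t)$. This gives $\tau(K)=3a-7b$, so in the first regime $g_4(K)\ge|\tau(K)|=7b-3a$. At $\omega=-1$ one gets $\sigma_K=-6a+10b$, which is negative precisely when $a>\tfrac53 b$ and in particular when $a\ge 2b$, so in the second regime $g_4(K)\ge\tfrac12|\sigma_K|=3a-5b$. Note that in the first regime the signature at $\omega=-1$ gives only $5b-3a<7b-3a$, and symmetrically $|\tau|<3a-5b$ in the second regime, so genuinely different invariants are needed in the two ranges; this is the interplay the theorem is meant to display.

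The crux is the upper bound, and everything rests on the single inequality $g_4\bigl(2T(3,4)\cs -T(3,8)\bigr)\le 1$. I would prove this by exhibiting a genus--one cobordism between $T(3,4)\cs T(3,4)$ and $T(3,8)$: writing $T(3,8)$ as the closure of $(\sigma_1\sigma_2)^8=(\sigma_1\sigma_2)^4\,(\sigma_1\sigma_2)^4$ on three strands, two band moves separate this product closure into the connected sum of the two factor closures, producing a connected cobordism with $\chi=-2$, hence of genus one (this is the case $p=3$ of the standard cobordism from $T(p,q)\cs T(p,q')$ to $T(p,q+q')$, whose genus is $\tfrac{p-1}{2}$). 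Granting this, the full upper bound follows from subadditivity of $g_4$. For $a\ge 2b$ write $K=b\bigl(2T(3,4)\cs -T(3,8)\bigr)\cs(a-2b)T(3,4)$, giving $g_4(K)\le b+3(a-2b)=3a-5b$. For $a<2b$ use $\lfloor a/2\rfloor$ copies of the building block $2T(3,4)\cs -T(3,8)$ together with the remaining copies of $-T(3,8)$, absorbing the single leftover $T(3,4)$ when $a$ is odd into one copy of $T(3,4)\cs -T(3,8)$ (whose genus is at most $1+3=4$); in every case this yields $g_4(K)\le 7b-3a$.

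With both bounds established the maximization statements are automatic. The Upsilon slice inequality gives $\tfrac1t|\U_K(t)|\le g_4(K)$ for all $t\in(0,1]$, while $\tfrac1t|\U_K(t)|\to|\tau(K)|=7b-3a$ as $t\to0^+$; combined with $g_4(K)=7b-3a$ this forces $\max_{t\in(0,1]}\tfrac1t|\U_K(t)|=7b-3a$ in the first regime. Symmetrically, $\tfrac12|\sigma_K(t)|\le g_4(K)$ for all $t$, with equality at the parameter corresponding to $\omega=-1$, forces $\max_{t\in[0,1]}\tfrac12|\sigma_K(t)|=3a-5b$ in the second regime. I expect the main obstacle to be the cobordism step: producing the genus--one surface explicitly and checking that the two band moves really yield the connected sum (and not some other merging) rather than merely confirming the Euler--characteristic bookkeeping, since this one surface is exactly what makes both invariant bounds sharp and pins the transition precisely at $a=2b$.
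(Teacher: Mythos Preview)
Your proof is correct and shares its geometric core with the paper's: both rest on the genus--one cobordism between $T(3,4)\cs T(3,4)$ and $T(3,8)$, which the paper phrases as the containment of a fiber surface for $2T(3,4)$ inside that of $T(3,8)$. The organization differs in two minor ways. For the upper bound, the paper surgers subsurfaces out of the canonical Seifert surface directly (removing a genus--$6a$ piece bounding $aT(3,4)\cs -aT(3,4)$ when $a\le 2b$, and a genus--$12b$ piece bounding $2bT(3,4)\cs -2bT(3,4)$ when $a\ge 2b$), while you assemble $K$ from copies of the building block $2T(3,4)\cs -T(3,8)$ and apply subadditivity of $g_4$; the arithmetic is the same. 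For the maximization statements, the paper computes $\max_t\tfrac1t|\U_K(t)|$ and $\max_t\tfrac12|\sigma_K(t)|$ in full, quoting the explicit piecewise formulas in each regime, whereas you evaluate only $\tau(K)$ and the classical signature at $\omega=-1$ and then deduce that these already realize the maxima from the general slice--genus inequalities combined with the equality $g_4(K)=|\tau(K)|$ (respectively $g_4(K)=\tfrac12|\sigma_K(1)|$) that you have just established. Your route is a bit shorter here, since it sidesteps the full computation of $\U_K$ and $\sigma_K$.
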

\vspace{.3cm}
The results of Theorem~\ref{theorem:pqr} are of particular note because the theorem utilizes $\U_K(t)$ at values of $t$ strictly between $0$ and $1$. As mentioned previously, for $t$ close to 0, $\U_K(t)$ is simply equal to $\tau(K)t$. When $t = 1$, the invariant $\U_K(1)$ is denoted by $\upsilon(K)$ and has been used in several ways. Ozsv\'ath-Stipsicz-Szab\'o ~\cite{oss2} recently showed that  $\upsilon(K)$  provides bounds on the four-dimensional crosscap number of a knot. The invariant $\upsilon(K)$ has also been used in~\cite{feller2} to provide a sharp bound on the four genus of $T(p,q)\#-T(p',q')$ for small values of $p$ and $p'$. The {\it alternating number} of torus knots was studied using $\upsilon(K)$ in~\cite{fpz}.  Theorem~\ref{theorem:pqr}  illustrates that for specific pairs $T(p,q)$ and $T(p',q')$, for different values of $a$ and $b$, the best bound on the four-genus is achieved from either $\tau(K)$, $\upsilon(K)$, or $\U_K(t)$ for some value of $t$ strictly between $0$ and $1$.

 \subsection{The stable four-genus of knots}  Many of our examples are most easily illustrated in terms  of the {\it stable  four-genus of knots}, defined in~\cite{livingston1}:
 $$g_s(K) = \lim_{n\to \infty} g_4(nK).$$
 The stable four-genus extends to give a semi-norm on the  tensor product of the smooth concordance group with the rational numbers, $\calc \otimes \qq$. Letting $t = a/(a+b)$, the statement that $g_4(aK \cs-bJ) = g$ quickly implies $g_s(tK \cs (1-t)J) \le g/(a+b).$  Figure~\ref{fig:3438} illustrates the lower bounds  on the stable genus $g_s(tT(3,4) \#- (1-t)T(3,8))$  that are provided by the signature function (marked with thinner segments, drawn in red) and the Upsilon function (marked with thicker segments, drawn in blue).  These computations are  presented in detail    in Section~\ref{mixed-example}.

 Theorem~\ref{example3438} states that the four-genus for this particular family of knots is exactly determined  by the larger of these lower bounds.  In Section~\ref{generalizing} we will illustrate other examples from the perspective of the stable genus, but in every case, the results immediately transfer back to give precise results concerning the four-genus.
 
\begin{figure}[h]
\fig{.45}{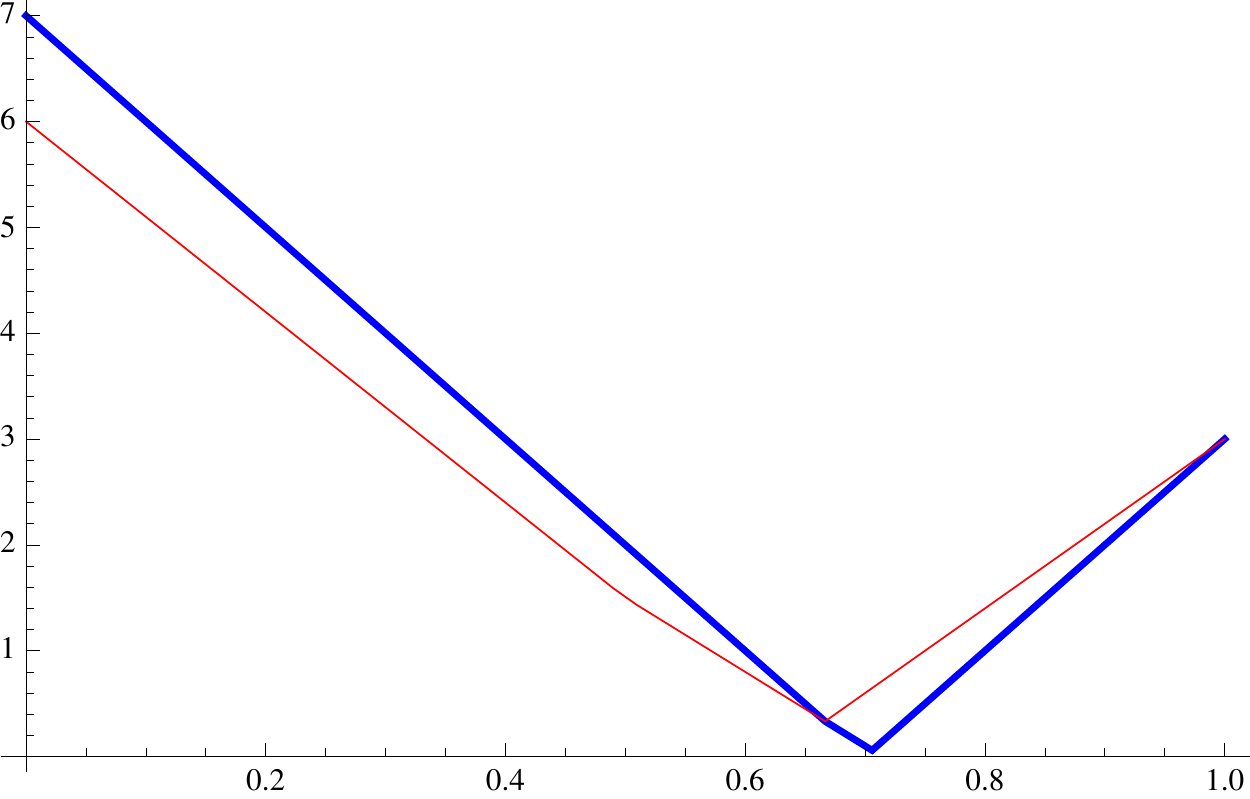}  \caption{The Upsilon lower bound (thick blue line) and the signature lower bound (thin red line) on the stable genus of $K =tT(3,4) - (1-t)T(3,8)$ where $t \in [0,1]$. The two lines meet at $t = \tfrac{2}{3}$. }\label{fig:3438}
\end{figure}

 \subsection{Outline}  
 In Sections~\ref{section:signatures} and~\ref{section:upsilon} we review the definitions and basic properties of the Levine-Tristram signature function and the Upsilon function of Ozsv\'ath-Stipsciz-Szab\'o.   Section~\ref{section:stable} reviews the stable four-genus.  Sections~\ref{section:litherlandthem},~\ref{section-purely-upsilon}, and ~\ref{mixed-example} present the proofs of  Theorems~\ref{thm:litherland},~\ref{theorem:pqr}, and~\ref{example3438}, respectively, along with generalizations.  Section~\ref{generalizing} explores the limits of our techniques, giving examples where the four-genus of $aT(p,q) \cs -bT(p',q')$ is still unknown. In Appendix~\ref{appendix:torusknotU} we compute  the second singularity of the Upsilon function for torus knots.  
 
 \subsection{Acknowledgments}  Since we first posted this work, Peter Feller and Allison Miller have informed us that they have found new examples of families of torus knots for which the four-genus can be determined.  Those examples are described in the closing section of this paper. We appreciate the feedback we received from them.
 

\section{Review of Signatures}\label{section:signatures}

 If $V_K$ is a Seifert matrix for an  oriented link $L$, there is an associated Hermitian matrix $$V_L(t)= (1- \omega)V + (1-\overline{\omega}) \text{transpose}(V),$$ where  $\omega = e^{ \pi i t}$, $0\le t \le 1$.  For each $t\in [0,1)$, we   define the signature function $\sigma_L(t)$ to be right-sided limit of the signature  of this matrix at $t$.   Usually the signature function is defined in terms of the average of the two-sided limits; either approach yields a concordance invariant, and both provide  identical bounds on the four-genus.  The advantage of using the one-sided limit is that it ensures that the maximum and minimum values  occur at discontinuities of the function, which are  values of  $t$ for which $e^{\pi i t}$ is a root of the Alexander polynomial.

For knots, the signature is an even  integer-valued step function.  As an example, in Figure~\ref{sig-pdf} we illustrate $\sigma_K$ of the torus knots $T(3,11)$ (below the axis) and $-T(5,6)$ (above the axis).  In the figure, each step is of size either 2 or $-2$.

\begin{figure}[h]
\fig{.5}{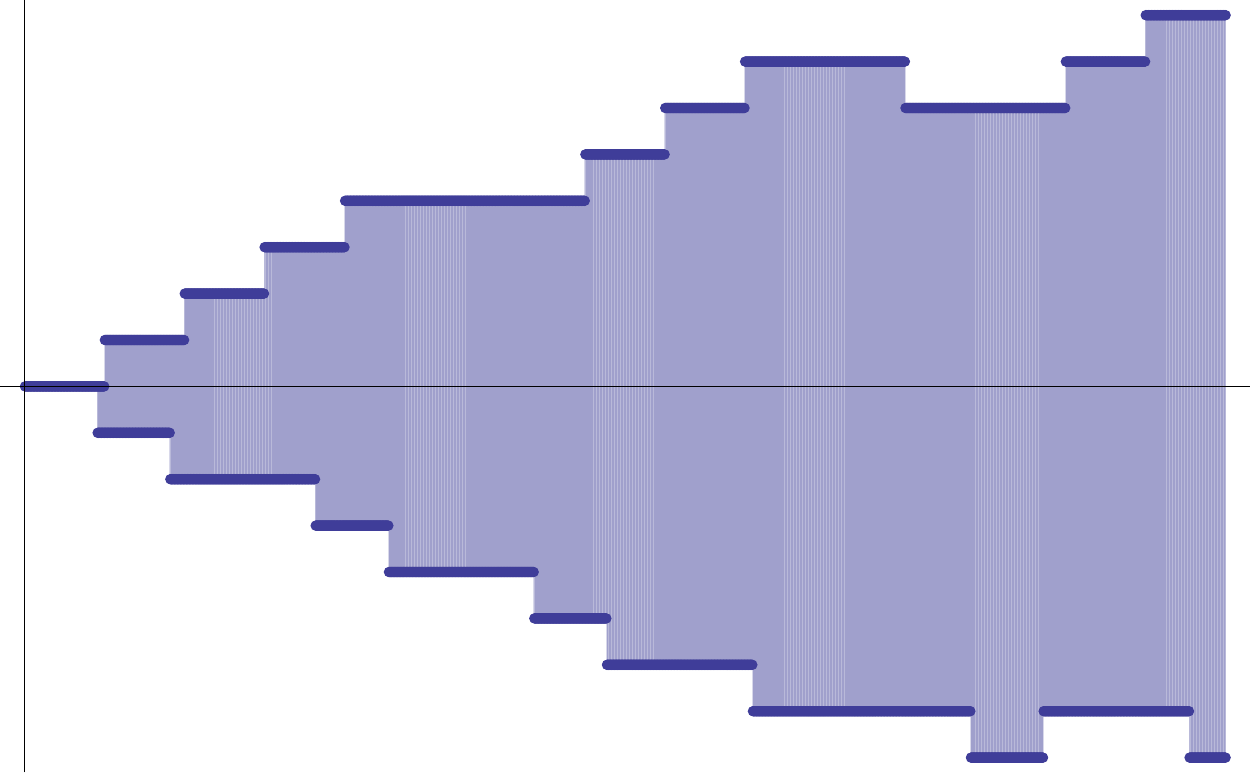}  \caption{The signature functions of $T(3,11)$ (below axis) and $-T(5,6)$ (above axis). }
\end{figure}\label{sig-pdf}

The signature function bounds the four-genus:   $g_4(K) \ge \frac{1}{2} |\sigma_K(t)|$,  for all $t$.   For instance, from the figure we see that $g_4(T(3,11)) \ge 8$ and $g_4(T(5,6)) \ge 8$ (the actual four-genus is 10 for both these knots).    Figure~\ref{figsigdif} illustrates the sum of these two signature functions, that is $\sigma_{T(3,11) \# - T(5,6)}(t)$, from which we get the bound $g_4(T(3,11) \# - T(5,6)) \ge 2$.

\begin{figure}[h]
\fig{.5}{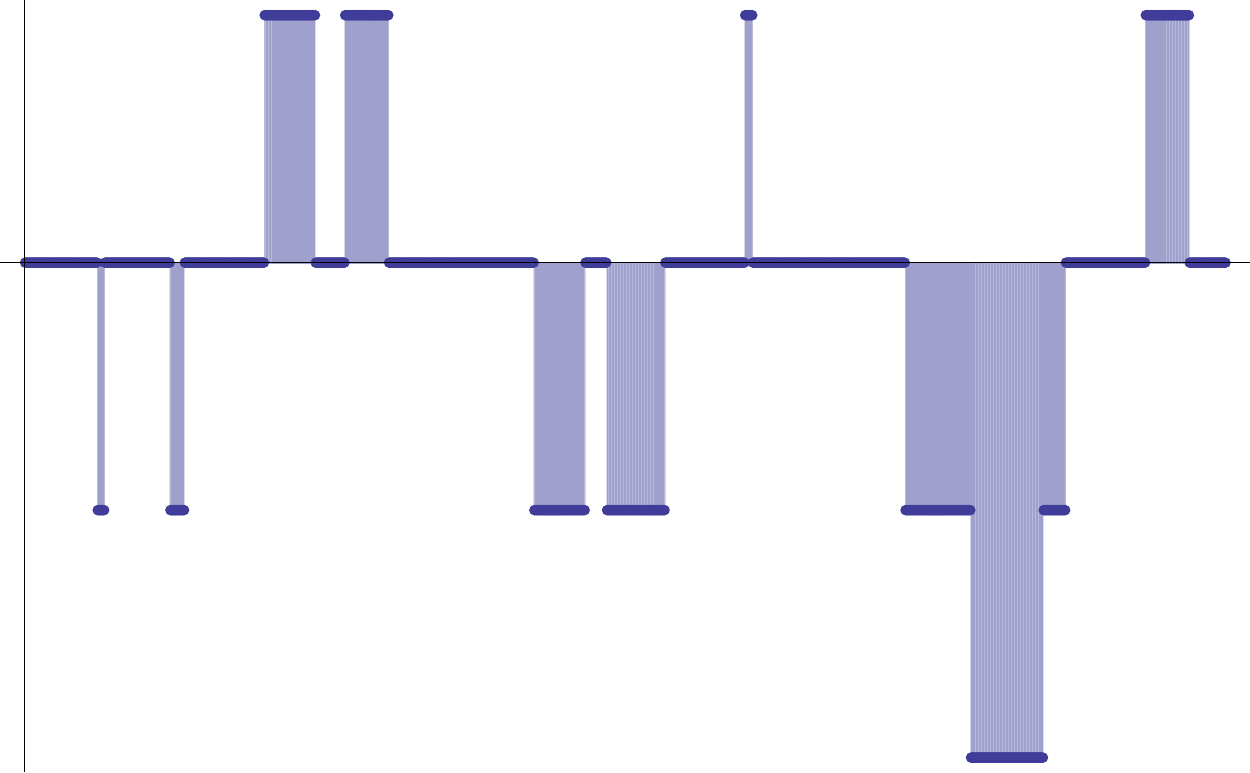}  \caption{The signature function of $K = T(3,11) \# - T(5,6)$.}\label{figsigdif}
\end{figure}

The necessary ingredients for computing the signature function for torus knots are describe in~\cite{goldsmith, litherland}.  We will also need to understand the signature of $(2,2k)$ torus links; these are also discussed in~\cite{goldsmith}, along with the genus bounds we are using.


\section{The Upsilon function, $\U_K(t)$}\label{section:upsilon}

The knot concordance invariant $\U_K(t)$ was first introduced in~\cite{oss}.  In an intuitive sense, it captures certain aspects of the  shape  of the knot Heegaard Floer complex $\cfk^\infty(K)$.  We refer the reader to~\cite{oss} for further details, or to the more expository account~\cite{livingston2}.  For torus knots, $\U_K(t)$ is easily computed from the Alexander polynomial or directly from the semigroup generated by $p$ and $q$, with algorithms presented in~\cite{borodzik-livingston, oss}.  Key results concerning $\U_K(t)$ include:

\begin{itemize}

\item  $\U_K(t)$Ä is  piecewise linear  with domain $[0,2]$ and $\U_K(0) = 0$.\vskip.05in

\item The map $K \to  \U_K $ defines a homomorphism from the smooth concordance group to the group of continuous functions on $[0,2]$.\vskip.05in

\item     $\U_K(t) = \U_K(2-t)$, which permits us to focus solely on the interval $[0,1]$.\vskip.05in

\item  For all $t \in (0,1]$, $g_4(K) \ge | \frac{\U_K(t)}{t}|$.\vskip.05in

\item There is a $t_1>0$ such that for all $t \in (0,t_1]$ one has $ {\U_K(t)}/{t} = \tau(K)$, where $\tau(K)$ is the Ozsv\'ath-Szab\'o $\tau$-invariant, defined in~\cite{os1}. 

\end{itemize}   

As an example,  Figure~\ref{ups-pdf}  illustrates the graph of $\U_K(t)$ for $K= T(5,6) \# - T(3,11)$.  The second, larger, function in the figure  is $\U_K(t)/t$.  The functions have domain $[0,1]$ and the maximum value  of $\U_K(t)/t$ is 2, attained at $t = \frac{2}{3}$. The other singular points are $t = \frac{2}{5}$ and $t = \frac{4}{5}$.

\vskip.1in
\begin{figure}[h]
\fig{.5}{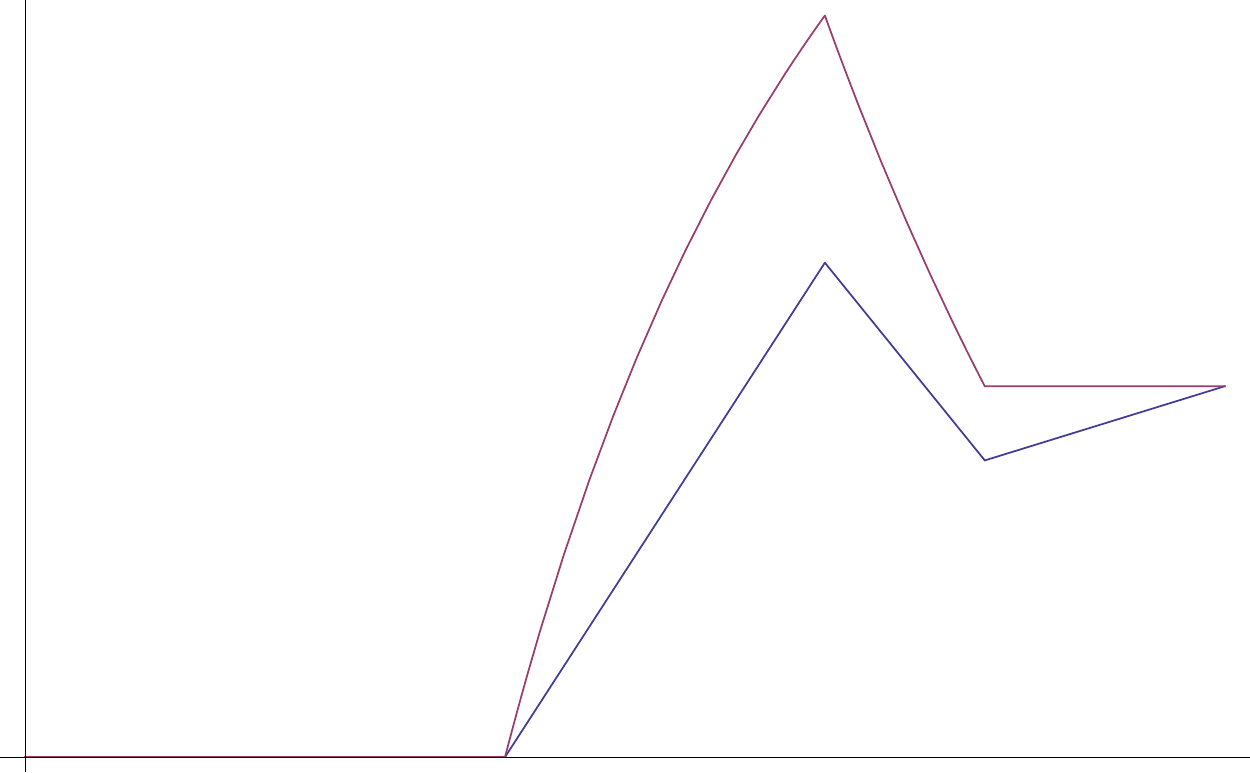}  \caption{$\U_K(t)$ and $\U_K(t)/t$ for $K= T(5,6) \# - T(3,11)$ for $t \in [0,1]$. }\label{ups-pdf}
\end{figure}

Since $\U_K(t)/ t$ is not piecewise linear, there is the possibility that finding maximum values could be complicated.  Given its form however, the following result is a simple  exercise.

\begin{theorem} For a knot $K$, the maximum value of $|\U_K(t)|/t$ is attained either at a singular point of the derivative  $\U'_K(t)$  on the interval $(0, 1)$ or at $t = 1$.
\end{theorem}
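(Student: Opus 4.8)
The plan is to exploit the piecewise-linear structure of $\U_K$ recorded in the bulleted list above. Since $\U_K$ is piecewise linear on $[0,1]$, the singular points of $\U'_K$ partition $(0,1]$ into finitely many closed subintervals on each of which $\U_K$ is genuinely linear, say $\U_K(t) = mt + c$ for constants $m,c$ depending on the piece. On such a piece the function in question becomes $g(t) := \U_K(t)/t = m + c/t$, whose derivative $g'(t) = -c/t^2$ has constant sign throughout the piece. Hence $g$ is monotone on each linear piece, and in particular has no interior critical point.

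First I would observe that monotonicity of $g$ on a piece forces the extreme values of $|g|$ over that closed piece to occur at its endpoints: if $g$ is monotone on $[\alpha,\beta]$, then its image is the interval with endpoints $g(\alpha)$ and $g(\beta)$, and the supremum of $|g|$ over an interval of real values is attained at one of its two ends. Consequently the maximum of $|\U_K(t)|/t$ over each linear piece is attained at an endpoint of that piece. The endpoints arising here are exactly the singular points of $\U'_K$, together with the right endpoint $t=1$ and the left endpoint $t=0$.

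The only endpoint not sanctioned by the statement is $t=0$, which also lies outside the domain $(0,1]$, so the remaining step is to show that nothing is lost there. Here I would invoke $\U_K(0)=0$: the first linear piece, which emanates from the origin, has intercept $c=0$, so on it $g(t)=m$ is constant and in fact equal to $\tau(K)$ by the last bullet above. Its value is therefore already realized at the right end of that first piece, which is either a singular point of $\U'_K$ in $(0,1)$ or $t=1$. Thus the candidate value ``attained as $t\to 0^+$'' is attained at an admissible point.

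Putting these together, the global maximum of $|\U_K(t)|/t$ on $(0,1]$ is the largest of the finitely many per-piece maxima, each of which is attained either at a singular point of $\U'_K$ in $(0,1)$ or at $t=1$, which is the assertion. I expect the only genuinely delicate point to be the bookkeeping at $t=0$; away from the origin the argument reduces to the observation that $m+c/t$ is monotone, so no maximum of $|\U_K(t)|/t$ can occur strictly inside a linear piece.
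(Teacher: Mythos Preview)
The paper does not actually prove this theorem; it simply states that, given the form of $\U_K(t)/t$, the result ``is a simple exercise.'' Your argument is correct and is precisely the sort of exercise the authors have in mind: on each linear piece $\U_K(t)=mt+c$ the quotient $m+c/t$ is monotone, so $|\U_K(t)|/t$ attains its maximum on that piece at an endpoint, and your use of $\U_K(0)=0$ to see that the first piece has $c=0$ (so the value near $0$ is already realized at the first singular point or at $t=1$) cleanly disposes of the one endpoint outside the allowed set.
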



\section{The Stable Four-Genus}\label{section:stable}
 In the introduction we described the stable four-genus.  Here we present a few more details.
 
The stable four-genus of a knot is defined to be $$g_s(K) = \lim_{n\to \infty} g_4(nK)/n.$$
In~\cite{livingston1} it is observed that this limit is well-defined. If we let $\calc$ denote the concordance group, then $g_s$ induces a semi-norm on the rational vector space $\calc \otimes \qq$.  That is, $g_s(aK) = ag_s(K)$ and $g_s(K \cs J) \le g_s(K) + g_s(J)$.  It is unknown whether $g_s$ is a norm:  there may be nontrivial elements $K \in \calc_\qq = \calc \otimes \qq$ with $g_s(K) =0$.

If $\nu(K)$ is any {\it additive} function on $\calc$ (meaning $\nu(nK) = n\nu(K)$ for $n\ge 0$)  it extends linearly to $\calc_\qq$.  Thus, we have the following.

\begin{theorem}If $\nu$ is an additive function on $\calc$ satisfying $|\nu(K)| \le g_4(K)$ for all $K$, then $|\nu(K)| \le g_s(K) $ for all $K \in \calc_\qq$.

\end{theorem}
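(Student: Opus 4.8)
The plan is to first establish the inequality on the integral concordance group $\calc$ and then bootstrap to $\calc_\qq$ using the homogeneity of both $\nu$ and $g_s$. The whole argument is essentially a formal unwinding of the definition of $g_s$, so I expect no real difficulty; the only thing to be careful about is that both quantities scale correctly when one clears denominators.

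First I would fix $K \in \calc$ and combine the hypothesis with additivity. For every integer $n \ge 0$, applying the assumed bound to the knot $nK$ gives $|\nu(nK)| \le g_4(nK)$, and additivity of $\nu$ rewrites the left-hand side as $n\,|\nu(K)|$. Dividing by $n$ yields $|\nu(K)| \le g_4(nK)/n$, and letting $n \to \infty$ while invoking the definition $g_s(K) = \lim_{n\to\infty} g_4(nK)/n$ produces $|\nu(K)| \le g_s(K)$ for every $K \in \calc$. This is the heart of the matter.

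Next I would pass to the rational case by clearing denominators. An arbitrary element $x \in \calc_\qq$ can be written as $x = \tfrac{1}{q}L$ for some $L \in \calc$ and some positive integer $q$, where $q$ is a common denominator of the rational coefficients appearing in $x$. Because $\nu$ extends linearly, $\nu(x) = \tfrac{1}{q}\nu(L)$, and because $g_s$ is homogeneous we have $g_s(x) = \tfrac{1}{q}g_s(L)$. Combining these two identities with the integral bound $|\nu(L)| \le g_s(L)$ from the previous paragraph gives $|\nu(x)| = \tfrac{1}{q}|\nu(L)| \le \tfrac{1}{q}g_s(L) = g_s(x)$, which is the desired conclusion.

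As noted, there is no serious obstacle: the statement is a direct consequence of the definitions once the limiting step is in place. The single point demanding a little attention is the transition from $\calc$ to $\calc_\qq$, where I must ensure that $\nu$ and $g_s$ both scale by the positive rational $\tfrac{1}{q}$. The linear extension of $\nu$ takes care of the first, and the already-recorded homogeneity property $g_s(aK) = a\,g_s(K)$ takes care of the second, so the two combine cleanly to finish the proof.
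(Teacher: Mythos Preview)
Your argument is correct and supplies exactly the details the paper omits: the paper states this theorem without proof, treating it as an immediate consequence of the definition of $g_s$ and the linear extension of $\nu$ to $\calc_\qq$. Your two-step proof (pass to the limit on $\calc$, then clear denominators to reach $\calc_\qq$) is the natural unwinding of that remark. The only cosmetic slip is that you write ``for every integer $n \ge 0$'' and then divide by $n$; you want $n \ge 1$ there.
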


For our work here, we will use $\U_K$ and $\sigma_K$ to find lower bounds on $g_4(K)$, but since the bounds arise from homomorphisms,  they  provide identical bounds on $g_s(K)$.  On the other hand, our realization results apply equally for the  four-genus as for the stable four-genus: for each  $K \in \calc_\qq$ for which we  compute $g_s(K)$ exactly, it follows that  for any multiple $aK \in \calc$, $g_4(aK) = ag_s(K)$.  This holds because of the explicit knots that we use to  attain our realization results; more precisely, we realize our bounds with knots of the form $aT(p,q) -bT(p',q')$ where $a$ and $b$ are relatively prime.

 
\section{Litherland's theorem on $(2,k)$--torus knots}\label{section:litherlandthem}

\subsection{Statement of theorem and preliminaries}$\ $


\begin{theorem}\label{lith} Let $K = aT(2,k)\cs  -bT(2,j)$ where $a$, $b$, $k$, and $j$ are positive integers and $k$ and $j$ are odd.  Then 
$$g_4(K)  = \max_{t\in[0,1]}( |\sigma_K(t)| /2).$$
\end{theorem}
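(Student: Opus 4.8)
The lower bound is immediate from the material in Section~\ref{section:signatures}: since $g_4(K) \ge \tfrac12|\sigma_K(t)|$ for every $t$, we automatically obtain $g_4(K) \ge \max_{t\in[0,1]}(|\sigma_K(t)|/2)$. Thus the entire content of the theorem is the \emph{upper} bound, and the plan is to construct an explicit oriented surface — equivalently, an explicit movie of coherent band moves — whose genus equals this maximal signature.

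First I would record the signatures of the constituent knots. For $k=2n+1$ the function $\sigma_{T(2,k)}(t)$ is the monotone staircase starting at $0$ that drops by $2$ at each of the $n$ parameter values $t=\tfrac1k,\tfrac3k,\dots,\tfrac{k-2}{k}$ and ends at $-2n$, and similarly for $T(2,j)$ with $j=2m+1$. Writing $s_k(t)=-\sigma_{T(2,k)}(t)$ and $s_j(t)=-\sigma_{T(2,j)}(t)$, additivity of the signature under $\cs$ together with $\sigma_{-T(2,j)}=-\sigma_{T(2,j)}$ gives $\sigma_K(t)=b\,s_j(t)-a\,s_k(t)$. I would then reinterpret $\tfrac12\sigma_K(t)$ as an integer walk: reading the jump parameters in increasing order produces a sequence of $an$ steps of $-1$ (one for each of the $a$ copies of each of the $n$ jumps of $T(2,k)$) together with $bm$ steps of $+1$. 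Because all steps of a given sign arising from a single jump parameter occur consecutively, every local extremum of this refined walk is a value actually attained by $\tfrac12\sigma_K$; hence the maximal absolute height $g$ of the walk equals $\max_t(|\sigma_K(t)|/2)$, whether that maximum is realized at an interior jump or at $t=1$.

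The construction itself is where the work lies. I would regard each $-1$ step as a positive, twist-reducing band in one of the copies of $T(2,k)$ and each $+1$ step as a negative band in one of the copies of $-T(2,j)$, and build a slice surface for $K$ by a movie performing these saddles in the staircase order: cancelling a positive band against an available negative band by a genus-free merge whenever the running signed count permits, and paying for a handle only when the count forces an unmatched band. The number of simultaneously unmatched bands never exceeds $g$, and an Euler-characteristic count of the resulting movie (saddles, births and deaths) should show that the surface has genus exactly $g$. Combined with the lower bound, this proves $g_4(K)=\max_{t\in[0,1]}(|\sigma_K(t)|/2)$.

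The hard part will be this last step: exhibiting the band moves as honestly oriented, coherent saddles on an explicit diagram of $K$, and verifying that the greedy pairing can always be realized geometrically so that the genus equals the maximal imbalance $g$ and no larger. Handling general multiplicities $a$ and $b$ — the jumps then come in clusters, so one must confirm that extrema occur only at cluster boundaries — and checking that the interior-maximum and $t=1$ regimes are treated uniformly by the walk is exactly where care is required; this is the content of Litherland's original argument, which we follow.
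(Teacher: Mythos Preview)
Your lower bound and the lattice-path reinterpretation of $\tfrac12\sigma_K$ are fine, but the proposal stops precisely where the theorem begins: you describe a greedy ``band-pairing'' movie and assert that an Euler-characteristic count ``should'' give genus equal to the maximal imbalance $g$, then defer the verification to ``Litherland's original argument.'' That deferral is the gap. What is never explained is the geometric meaning of ``cancelling a positive band against an available negative band by a genus-free merge.'' A twist-reducing coherent band on a copy of $T(2,k)$ and one on a copy of $-T(2,j)$ are saddles on \emph{different} summands; two saddles change $\chi$ by $-2$ regardless, and whether that cost is absorbed by components (caps) rather than genus depends on a specific geometric arrangement you have not produced. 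Without an explicit diagram-level description of the bands and a count of births/deaths, the claim that the surface has genus exactly $g$ is unproven.

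It is also worth knowing that the argument you are appealing to is \emph{not} the direct greedy construction you outline. The paper's (and Litherland's) proof is recursive in the spirit of the Euclidean algorithm: one writes $k=ql+r$, and in the three regimes $b\le qa$, $r=0$, and $r\ne 0$ with $b>qa$ one either surgers $b$ or $aq$ copies of the sub-Seifert-surface for $T(2,l)\cs -T(2,l)$ out of the canonical Seifert surface, or performs $aq$ band moves to pass to $(b-qa)T(2,l)\cs -aT(2,r)$. A matching recursion for the signature maximum $\mu$ (including the passage through even parameters, hence links and $\beta_1$ rather than genus) then shows $\beta=\mu$ by induction. So either carry out your direct construction in full---specifying the bands, tracking components, and doing the $\chi$ count honestly---or switch to the recursive scheme; as written, the proposal does neither.
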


As described in Section~\ref{section:signatures}, we are working with  the signature function $\sigma_K(t)$ defined to equal  the standard signature function  except at the values of $t$ where $\sigma_K(t)$ is discontinuous; at discontinuities,  the value is given as a right-handed limit rather than as the average of the two-sided limits.  The benefit  of this approach is the signature function is right continuous.  Regardless of the choice of convention, the  maximum and minimum values of signature functions are equal; these are exactly the values of the signature function which we are after. Moreover,  with this modification, we have the following result.

\begin{lemma}Let $K$ be any knot. If $\sigma_K(t)$ is the right continuous signature function, then the maximum and minimum values of the signature are each attained at one of the function's points of discontinuity. 
\end{lemma}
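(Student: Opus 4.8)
The plan is to prove the lemma in two stages: first establish that $\sigma_K$ is a step function with only finitely many jumps, and then show that its extreme values occur at those jumps. For the first stage I would begin with the Hermitian matrix $V_K(t) = (1-\omega)V + (1-\bar\omega)V^T$, where $\omega = e^{\pi i t}$. As recalled in Section~\ref{section:signatures}, $V_K(t)$ is nonsingular except at the finitely many $t \in [0,1]$ for which $e^{\pi i t}$ is a root of the Alexander polynomial. Where $V_K(t)$ is nonsingular its eigenvalues are nonzero and vary continuously with $t$, so none can change sign; hence the signature is locally constant there. Listing the exceptional values as $0 < t_1 < \cdots < t_m < 1$, it follows that $\sigma_K$ is constant on each of $(0,t_1), (t_1,t_2), \dots, (t_m,1)$ and that its only possible discontinuities lie in $\{t_1, \dots, t_m\}$.

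For the second stage I would use right-continuity to identify the finitely many values the function takes. With this convention $\sigma_K(t_i)$ equals the constant value $c_i$ of $\sigma_K$ on $[t_i, t_{i+1})$ (with $c_m$ the value on $[t_m,1]$), so $\sigma_K$ assumes only the values $c_0, c_1, \dots, c_m$, where $c_0$ is its value on the initial interval $[0,t_1)$. The maximum and minimum therefore exist and equal the largest and smallest of the $c_i$. For each $i \ge 1$ the value $c_i = \sigma_K(t_i)$ is an honest function value at the discontinuity $t_i$, so any extremum equal to such a $c_i$ is attained at a point of discontinuity.

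The one value not yet accounted for is $c_0$, and disposing of it is the main obstacle. Here I would invoke the computation $\sigma_K(0) = 0$: near $t = 0$ one has $V_K(t) \approx \pi i t\,(V^T - V)$, and since $V - V^T$ is unimodular for a knot this Hermitian form is nonsingular with eigenvalues occurring in $\pm$ pairs, so its signature is $0$. Hence $c_0 = 0$, and this value is simultaneously the left-hand limit of $\sigma_K$ at $t_1$, so it too is realized at a discontinuity as a one-sided limit. In particular, whenever an extreme value of $\sigma_K$ is nonzero it must equal one of $c_1, \dots, c_m$ and is therefore attained as a function value at some $t_i$. This is exactly the situation in our applications, where $\sigma_K$ takes both signs and the relevant extreme value of $|\sigma_K|$, which governs the four-genus bound, is read off from the values $\sigma_K(t_i)$ at the finitely many discontinuities.
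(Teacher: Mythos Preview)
The paper states this lemma without proof, so there is no argument to compare yours against; your write-up essentially supplies the missing justification, and the two stages you outline are exactly the right structure.

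Your first stage is fine: the continuity-of-eigenvalues argument shows the signature is locally constant away from the finitely many $t$ with $\Delta_K(e^{\pi i t})=0$, and your computation that $c_0=\sigma_K(0^+)=0$ via $V_K(t)\approx \pi i t\,(V^T-V)$ is correct (skew-symmetry forces the eigenvalues of $i(V^T-V)$ into $\pm$ pairs, and unimodularity rules out zero eigenvalues).

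Your second stage is also correct, and you are right to flag $c_0$ as the delicate case.  In fact the lemma as literally stated is slightly too strong: for $K=-T(2,3)$ the right-continuous signature is $0$ on $[0,\tfrac13)$ and $+2$ on $[\tfrac13,1)$, so the minimum value $0$ is \emph{not} attained at the unique discontinuity $t_1=\tfrac13$.  Your phrase ``realized at a discontinuity as a one-sided limit'' papers over this rather than resolving it.  What you actually prove, and what the paper genuinely needs, is the sharper statement in your final paragraph: any \emph{nonzero} extreme value of $\sigma_K$ equals some $c_i$ with $i\ge 1$ and is therefore the function value at the discontinuity $t_i$.  Since the paper only ever uses the lemma to compute $\max_t|\sigma_K(t)|$, and since $|c_0|=0$, this weaker conclusion is exactly sufficient.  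I would recommend stating that version cleanly rather than trying to rescue the $c_0$ case.
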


In the special case where $K = T(2,k)$, the points of discontinuity for the signature function and then its values are readily computed.  We give a summary of the derivation.

\begin{lemma}
$  $
\begin{enumerate}
\item
The points of discontinuity in $\sigma_{T(2,k)}(t)$ occur at the values in the set  $$X_k =\{ i/k \ |\   0 \le i <k  \text{\ and\ }   i \equiv k \mod 2\}.$$
\item For $t\in[0,1)$, the right continuous function $\sigma_{T(2,k)}(t)$ is given by 
$$\sigma_{T(2,k)}(t) =-( \lfloor  kt \rfloor_k +1),$$
where $ \lfloor  x  \rfloor_k $ denotes the greatest integer $n$ such that $n \equiv k \mod 2$ and $n \le x$.
\end{enumerate}
\end{lemma}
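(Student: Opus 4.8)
The plan is to diagonalize the Hermitian matrix $V_K(t)$ explicitly and read the signature off as a function of $t$, so that both parts of the lemma drop out of one computation. The knot $T(2,k)$ bounds its fiber surface of genus $(k-1)/2$, whose Seifert matrix $V$ is the $(k-1)\times(k-1)$ bidiagonal Toeplitz matrix with $-1$ along the diagonal and $1$ along the superdiagonal; I fix this sign convention so that the positive knot $T(2,k)$ acquires negative signature, matching the stated formula. With $\omega = e^{\pi i t}$, the associated Hermitian matrix $V_K(t) = (1-\omega)V + (1-\overline{\omega})\,\mathrm{transpose}(V)$ is then tridiagonal Toeplitz: every diagonal entry equals $-2(1-\cos\pi t) = -4\sin^2(\pi t/2)$, every superdiagonal entry equals $1-\omega$, and every subdiagonal entry equals $1-\overline{\omega}$.

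First I would reduce to a real matrix. Because the off-diagonal entries all have the common modulus $|1-\omega| = 2\sin(\pi t/2)$ on $[0,1]$, conjugating by a suitable diagonal unitary matrix replaces them by this modulus while leaving the diagonal unchanged, producing the real symmetric tridiagonal Toeplitz matrix with diagonal $-4\sin^2(\pi t/2)$ and off-diagonal $2\sin(\pi t/2)$. Its eigenvalues are given by the standard closed form
$$\lambda_j = -4\sin^2(\pi t/2) + 4\sin(\pi t/2)\cos(\pi j/k) = 4\sin(\pi t/2)\bigl(\cos(\pi j/k)-\sin(\pi t/2)\bigr), \qquad j=1,\ldots,k-1.$$
For $t\in(0,1)$ the prefactor $4\sin(\pi t/2)$ is positive, so $\lambda_j$ has the sign of $\cos(\pi j/k)-\sin(\pi t/2)$; writing $\sin(\pi t/2)=\cos(\pi(1-t)/2)$ and using that cosine is decreasing on $[0,\pi]$, one finds $\lambda_j>0$ exactly when $j<k(1-t)/2$. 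In particular $\lambda_j$ vanishes precisely at $t=(k-2j)/k$, and as $t$ increases across that value $\lambda_j$ turns from positive to negative.

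The two parts now follow by bookkeeping. The signature is $\sigma_K(t)=2P(t)-(k-1)$, where $P(t)$ denotes the number of positive eigenvalues; since each $\lambda_j$ changes sign exactly once, $P(t)$ is a right-continuous, nonincreasing step function that drops by one — so $\sigma_K$ drops by two — each time $t$ crosses a value $(k-2j)/k$ with $1\le j\le(k-1)/2$. These crossing values are exactly the numbers $i/k$ with $i=k-2j$ odd and $0\le i<k$, that is, the set $X_k$; away from $X_k$ no eigenvalue vanishes and $\sigma_K$ is locally constant, so there are no other discontinuities, which proves part (1). Anchoring at $\sigma_K\equiv 0$ on $[0,1/k)$ (where $P=(k-1)/2$) and subtracting $2$ at each element of $X_k$ gives $\sigma_K(t)=-(i+1)$ on each interval $[i/k,(i+2)/k)$ with $i$ odd; since $\lfloor kt\rfloor_k=i$ throughout that interval, this is precisely $\sigma_K(t)=-(\lfloor kt\rfloor_k+1)$, which is part (2).

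The work here is not concentrated in one hard step but in coordinating conventions. One must fix the Seifert matrix so that the overall sign of $\sigma_K$ agrees with the formula, verify that the diagonal unitary conjugation genuinely preserves the Toeplitz structure so that the closed-form eigenvalues apply, and reconcile the location of the jumps with the right-continuity convention: at the degenerate values $t\in X_k$ one eigenvalue is exactly zero, and the right-continuous convention records the value immediately to the right (where that eigenvalue is already negative), which is exactly what the parity-floor function $\lfloor\,\cdot\,\rfloor_k$ encodes.
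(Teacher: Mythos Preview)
Your proof is correct and takes a genuinely different route from the paper's. The paper argues indirectly: it identifies the discontinuities as the unit-circle roots of the Alexander polynomial $(t^k + (-1)^{k+1})/(1+t)$, notes that simple roots force each jump to be $\pm 2$, and then pins down the sign of every jump by computing the endpoint values $\sigma_K(0^+)\in\{0,-1\}$ and $\sigma_K(1^-)=-(k-1)$ and counting. Your argument is a direct diagonalization: you reduce $V_K(t)$ by a diagonal unitary conjugation to a real symmetric tridiagonal Toeplitz matrix and invoke the closed-form spectrum $a+2b\cos(\pi j/k)$, which simultaneously locates the zero-crossings and determines the signs. Your approach is more self-contained (no appeal to the Alexander-polynomial/signature-jump machinery) and actually gives more---you see each individual eigenvalue turn negative---while the paper's is shorter and transports to settings where explicit eigenvalues are unavailable.

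One small gap: as written you treat only odd $k$. You call $T(2,k)$ a knot of genus $(k-1)/2$, take $i=k-2j$ to be odd, and anchor at $\sigma_K\equiv 0$ on $[0,1/k)$ with $P=(k-1)/2$. The lemma (and the recursive proof of Litherland's theorem that it feeds) also needs even $k$, where $T(2,k)$ is a two-component link, $X_k$ consists of the even $i/k$ (including $i=0$), and the anchor is $\sigma_K\equiv -1$ near $0$. Fortunately your eigenvalue formula $\lambda_j=4\sin(\pi t/2)\bigl(\cos(\pi j/k)-\sin(\pi t/2)\bigr)$ is insensitive to the parity of $k$, and the Seifert matrix for the oriented link is the same $(k-1)\times(k-1)$ bidiagonal matrix, so the identical bookkeeping goes through: for $k$ even one has $P(0^+)=k/2-1$ (the eigenvalue with $j=k/2$ is already negative arbitrarily close to $0$), whence $\sigma_K(0^+)=-1=-(\lfloor 0\rfloor_k+1)$, and the jumps occur at the even values $(k-2j)/k$. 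A sentence noting that the argument covers both parities would complete the proof.
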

 \begin{proof}
The Alexander polynomial, which can be computed from the standard Seifert matrix $V$ as $\text{ det}(V - t\text{ transpose}(V))$, is $ (t^k + (-1)^{k+1})/(1+t)$, which has $k - 1$ roots on the unit circle, each with multiplicity one.  This yields statement (1).

Since the roots each have multiplicity one, the signature function jumps by $\pm 2$ at each of these roots.  The signature at $1$ is given as the signature of   $(V + \text{transpose}(V))$, which can be computed  to be $-k$.  Near 0, the signature function is either 0 or $-1$, depending on whether $k$ is odd or even, respectively.  Given the number of jumps, each must be negative, yielding the desired result.

\end{proof}

In light of the above discussion, it follows that for any torus knot $T(2,k)$, 
$$\max_{t\in[0,1]}( |\sigma_{T(2,k)}(t)|) = \max_{t\in X_k}(| \lfloor  kt \rfloor_k +1|) .$$
With this motivation, we introduce the following notation.  (Notice that the following notation entails a change of sign in the signature function; this somewhat simplifies our computations.)

\begin{definition}$\ $
$\sigma_k(t) =  \lfloor  kt  \rfloor_k +1$. \vskip.05in
\end{definition}

For the linear combination of torus knots $K = aT(2,k)\cs  -bT(2,j)$, we have a corresponding observation: 
\begin{eqnarray*}
\max_{t\in[0,1]}( |\sigma_{K}(t)|) &=& \max_{t\in[0,1]}( |a\sigma_{T(2,k)}(t)- b\sigma_{T(2,l)}(t)|)\\
&=& \max_{t\in X_k\cup X_l}( |a\sigma_{k}(t)- b\sigma_{l}(t)|).
\end{eqnarray*}
This motivates the following notation: 
\begin{definition}$\ $
\begin{enumerate}
\item   $\sigma_{a,k,b,l} (t)  = a\sigma_k (t) - b\sigma_l (t) $.\vskip.05in
  \item  $\mu(a,k,b,l) =   \max_{t\in X_k\cup X_l}(| \sigma_{a,k,b,l} (t)| )$.\vskip.05in
\end{enumerate}
\end{definition}
Thus to prove Theorem~\ref{lith}, it suffices to prove that 
$$g_4(K) = \tfrac{1}{2}\mu(a,k,b,l).$$

\subsection{Key steps of proof of Theorem \ref{lith}}$\ $

The proof of Theorem~\ref{lith} is recursive and requires one to consider cases in which one, but not both, of $k$ and $l$ are even.  In this case, $K = aT(2,k)\cs  -bT(2,l)$ is the connected sum of a torus knot and a torus {\em link}. For such $K$, it is simpler to  consider the first Betti number rather than genus. 

\begin{definition}
$\beta(K) = \min \{ \beta_1(F) \ |\  F\subset B^4 \text{ and } \partial F = K\}$.
\end{definition}

Theorem~\ref{lith} can be reformulated as follows: 

\begin{theorem} Let $K = aT(2,k)\cs  -bT(2,l)$. If at least one of $k$ and $l$ are odd, then   $$\beta(aT(2,k) \# - bT(2,l)) = \mu(a,k,b,l).$$\vskip.05in
\end{theorem}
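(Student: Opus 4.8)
The plan is to prove the two inequalities separately: the lower bound $\beta(K) \ge \mu(a,k,b,l)$ comes directly from signatures, while the reverse inequality is a realization statement proved by induction via band moves.

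First I would establish the lower bound. If $F \subset B^4$ is any surface with $\partial F = K$, then for every $t$ the signature bound recorded in Section~\ref{section:signatures} (in the torus \emph{link} case, the form discussed in~\cite{goldsmith}) gives $\beta_1(F) \ge |\sigma_K(t)|$. Specializing to the discontinuity set $t \in X_k \cup X_l$ and invoking the identification $\sigma_K = a\sigma_{T(2,k)} - b\sigma_{T(2,l)}$, together with the observation already derived above that $\max_{t\in[0,1]}|\sigma_K(t)| = \max_{t\in X_k\cup X_l}|\sigma_{a,k,b,l}(t)|$, shows that $\mu(a,k,b,l)$ is a lower bound for $\beta_1(F)$. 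Taking the infimum over all such $F$ yields $\beta(K) \ge \mu(a,k,b,l)$.

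The substance is the reverse inequality, which I would prove by induction on the complexity $c(K) = ak + bl$. In the base cases one of $a,b$ is zero, so $K$ is $\pm$ a connected sum of $(2,m)$ torus links; here a surface of minimal Betti number is assembled from the standard slice surfaces realizing $g_4(T(2,m)) = (m-1)/2$, and one checks directly that its first Betti number equals $\mu$. The inductive step rests on a single oriented band (saddle) applied to one summand: smoothing a crossing of $T(2,k)$ produces $T(2,k-1)$ and changes the number of components, so it realizes a cobordism contributing exactly $1$ to the first Betti number of the eventual surface, while strictly lowering $c(K)$. Since at most one of $k,l$ is even, this move alternates between the knot and link cases as the parameter drops by one, which is exactly why the theorem is phrased in terms of $\beta$ and permits torus links as intermediate objects. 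I expect the induction to require the more general quantity for an arbitrary connected sum $\cs_i \varepsilon_i T(2,m_i)$, with $\varepsilon_i = \pm 1$, namely $\max$ over the relevant discontinuity set of $|\sum_i \varepsilon_i \sigma_{m_i}(t)|$, because a single saddle acts on only one summand and so temporarily breaks the uniform form $aT(2,k)\cs -bT(2,l)$.

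The crux, and the step I expect to be the main obstacle, is the combinatorial bookkeeping that matches the band count to the change in $\mu$. Concretely, for the chosen saddle I would need a lemma of the shape $\mu = \mu' + (\text{number of bands used})$, valid precisely when the reduction is carried out on the correct summand and in the correct direction. Verifying such an identity amounts to analyzing where the maximum of the step function $\sigma_{a,k,b,l}(t) = a\sigma_k(t) - b\sigma_l(t)$ is attained on $X_k \cup X_l$, tracking how the maximizing $t$ and the competing local maxima migrate as $k$ (or $l$) decreases by one, and confirming that one can always select the band so as to decrease the relevant maximum by exactly one rather than overshoot. Handling the parity alternation and guaranteeing that the construction never produces a surface of Betti number exceeding $\mu$, so that the realized surface is optimal and the induction closes, is where the real difficulty lies; the geometric input (existence of the saddle cobordisms and the disk caps) is standard, but the sharp bookkeeping against $\mu$ is delicate.
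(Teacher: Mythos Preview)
Your lower bound is fine and matches the paper. The realization half, however, is organized quite differently from the paper's argument, and as written it has a genuine gap.

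The paper does \emph{not} induct on $ak+bl$ by peeling off one saddle at a time. Instead it runs a Euclidean-algorithm recursion on the pair $(k,l)$: write $k=ql+r$ and, when $b>qa$ and $r\ne 0$, attach $qa$ bands simultaneously, joining each copy of $T(2,k)$ to $q$ distinct copies of $-T(2,l)$. This produces the link $(b-qa)T(2,l)\cs -aT(2,r)$, which is again of the uniform shape $a'T(2,k')\cs -b'T(2,l')$. The matching signature identity is the single equality
\[
\mu(a,k,b,l)\ \ge\ \mu(b-qa,l,a,r)+qa,
\]
proved by an explicit computation with the functions $\sigma_m(t)=\lfloor mt\rfloor_m+1$ on $X_k$ and $X_l$ (Lemma~\ref{lith2lemma}(3)). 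The other two cases ($b\le qa$, or $r=0$) terminate the recursion with a direct surgery of a slice subsurface and a one-line evaluation of $\sigma_K$ at $t=1$ or $t=(l-2)/l$.

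Your plan instead reduces $k$ to $k-1$ with a single saddle on one summand, which immediately leaves the class $aT(2,k)\cs -bT(2,l)$ and forces you, as you anticipated, to prove the stronger statement for arbitrary signed sums $\cs_i\varepsilon_iT(2,m_i)$. That is a different theorem, and the step you flag as the ``main obstacle'' is exactly the missing content: you would need, at every stage, a summand whose single-band reduction lowers $\mu$ by one. Since $\sigma_k(t)-\sigma_{k-1}(t)$ takes both signs on $[0,1]$, which saddle to choose depends on where the maximum of $|\sigma_L|$ sits, and you give no mechanism for making that choice or proving it always exists. Concretely, for $L=T(2,3)\cs -T(2,3)$ one has $\mu(L)=0$, yet any single saddle raises $\mu$ to $1$, so your induction cannot simply march $k$ downward; it must detect slice sublinks and cap them, which amounts to reinventing the batched surgeries the paper uses. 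The Euclidean recursion is precisely the device that sidesteps this: it keeps the two-parameter form, performs all $qa$ bands at once, and replaces your delicate one-step bookkeeping with a closed-form identity on $\mu$.
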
 


We already have that $\beta(aT(2,k) \cs - bT(2,l) ) \ge \mu(a,k,b,l)$.  The proof of equality readily follows from the next two recursive results.

\begin{lemma}\label{lith1lemma}  Write $k = ql +r$ with $0\le r <l$.
\begin{enumerate}
\item If $b \le qa$, then $$\beta(a T(2,k) \cs- bT(2,l) )\le a(k-1) - b(l-1).$$\vskip.05in

\item If $r= 0 $ and $b > qa$, then $$\beta( aT(2,k)\cs - bT(2,l) )\le -a(k - 2q+1) + b(l-1).$$\vskip.05in

\item If $r \ne 0$ and $b> qa$, then $$\beta( aT(2,k) \cs- bT(2,l) )\le \beta((b-qa)T(2,l) \cs- aT(2,r)) + qa.$$\vskip.05in
\end{enumerate}

\end{lemma}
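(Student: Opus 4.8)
The plan is to prove all three inequalities by constructing explicit surfaces in $B^4$, obtained from the standard Seifert surfaces by attaching bands, and to organize the construction around a single building-block cobordism. The building block I would use is the elementary fact that for positive integers $c$ and $d$ there is a one-band (saddle) cobordism between $T(2,c+d)$ and $T(2,c)\cs T(2,d)$. A quick Euler-characteristic check confirms this is consistent: using $\beta_1(T(2,m)) = m-1$ and that a single saddle changes the number of components by one, both sides differ in $\beta_1$ by exactly $1$, and the band itself is visible in the $2$--braid closure of $\sigma_1^{c+d}$ by pinching the two strands after the $c$-th crossing. Alongside this I would record the bookkeeping facts I need: if there is a cobordism from $K_0$ to $K_1$ using $s$ bands then $\beta(K_0)\le \beta(K_1)+s$ (glue an optimal surface for $K_1$ onto the trace of the bands); $\beta$ is subadditive under connected sum, is unchanged under the mirror-reverse $K\mapsto -K$ and under connected sum with a slice knot; and $\beta(nT(2,m))\le n(m-1)$ from the obvious Seifert surface.

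\textbf{The peeling cobordism.} The central construction is a ``peeling'' cobordism obtained by iterating the building block on $T(2,k)=T(2,ql+r)$. Writing $ql+r = l + \big((q-1)l+r\big)$ and applying the building block repeatedly splits off one $T(2,l)$ summand at a time. When $r\ge 1$ this gives a cobordism with $q$ bands from $T(2,k)$ to $qT(2,l)\cs T(2,r)$; when $r=0$ it gives a cobordism with only $q-1$ bands from $T(2,k)$ to $qT(2,l)$, since the last summand is already $T(2,l)$ and needs no further band. Applying this to each of the $a$ copies of $T(2,k)$ in $K=aT(2,k)\cs -bT(2,l)$ produces, at a cost of $qa$ bands (respectively $a(q-1)$ bands when $r=0$), a cobordism from $K$ to $aT(2,r)\cs qa\,T(2,l)\cs -bT(2,l)$ (respectively to $qa\,T(2,l)\cs -bT(2,l)$).

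\textbf{Cancelling and reading off the three cases.} It then remains to bound $\beta$ of the target by cancelling $T(2,l)$ summands against $-T(2,l)$ summands, using that $T(2,l)\cs -T(2,l)$ is slice (so cancellation costs no bands, as $\beta$ is a concordance invariant). The three cases of the lemma correspond to: $b\le qa$ (case 1, any $r$); $b>qa$ with $r=0$ (case 2); and $b>qa$ with $r\ge 1$ (case 3). In case 1 the target is concordant to $aT(2,r)\cs (qa-b)T(2,l)$, whose $\beta$ is at most $a(r-1)+(qa-b)(l-1)$ by subadditivity and the Seifert bound (when $r=0$ one instead uses the early-terminating peeling, and the arithmetic gives the same value); adding the $qa$ bands and simplifying yields exactly $a(k-1)-b(l-1)$. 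In case 2 the target is concordant to $-(b-qa)T(2,l)$, so its $\beta$ is at most $(b-qa)(l-1)$, and adding the $a(q-1)$ bands together with $k=ql$ gives $-a(k-2q+1)+b(l-1)$. In case 3 the $T(2,l)$ summands cancel partially to leave $aT(2,r)\cs -(b-qa)T(2,l)$, whose $\beta$ equals $\beta\big((b-qa)T(2,l)\cs -aT(2,r)\big)$ since $\beta$ is unchanged under mirror-reverse; adding the $qa$ bands gives precisely the recursive bound.

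\textbf{Main obstacle.} The step I expect to be the crux is the careful verification of the building-block cobordism and the Betti-number bookkeeping across it: in particular, checking that the bands can be chosen orientation-compatibly, so that the cobordism is oriented and the cancellation of $T(2,l)\cs -T(2,l)$ pairs is legitimate, and correctly treating the degenerate case $r=0$, where the peeling terminates one band early and thereby produces the different shape of the bound in case 2. Everything else reduces to the elementary arithmetic simplifications above together with subadditivity, the Seifert bound, and concordance-invariance of $\beta$.
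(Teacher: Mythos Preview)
Your proposal is correct and rests on the same geometric input as the paper's proof: the fact that $T(2,k)$ with $k=ql+r$ admits a simple cobordism to $qT(2,l)\cs T(2,r)$. The organization differs slightly. For parts (1) and (2) the paper phrases this three-dimensionally---the canonical Seifert surface for $T(2,k)$ \emph{contains} a subsurface bounded by $qT(2,l)$---and then surgers out $bT(2,l)\cs -bT(2,l)$ (respectively $aqT(2,l)\cs -aqT(2,l)$) directly from the Seifert surface of $K$, so no separate band count ever enters. For part (3) the paper performs $qa$ band moves connecting each $T(2,k)$ summand to $q$ of the $-T(2,l)$ summands, landing immediately on $(b-qa)T(2,l)\cs -aT(2,r)$. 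Your version instead runs a single uniform ``peeling'' cobordism in all three cases and then cancels $T(2,l)\cs -T(2,l)$ pairs by concordance; the arithmetic is identical, and your treatment of the $r=0$ early termination (yielding $a(q-1)$ rather than $aq$ bands) is exactly what produces the different shape of the bound in case (2). The paper's subsurface viewpoint is marginally more direct for (1)--(2) since it avoids the ``add $qa$, then recover it'' bookkeeping, while your framing has the virtue of handling all three cases with one construction.
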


\begin{lemma}\label{lith2lemma} Write $k = ql +r$ with $0\le r <l$.
\begin{enumerate}
\item If $b \le qa$, then $$\mu(a,k,b,l) = a(k-1) - b(l-1).$$\vskip.05in

\item If $r= 0 $ and $b > qa$, then $$\mu(a,k,b,l) = -a(k - 2q+1) + b(l-1).$$\vskip.05in

\item If $r \ne 0$ and $b> qa$, then $$\mu(a,k,b,l) \ge  \mu(b-qa,l,a,r) + qa.$$\vskip.05in
\end{enumerate}

\end{lemma}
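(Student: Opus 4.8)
The plan is to prove the three parts separately, treating (1) and (2) as explicit computations and (3) as the genuinely recursive step. Throughout I would work with the step functions directly, using $\sigma_k(i/k)=i+1$ for $i\equiv k \pmod 2$, $0\le i<k$, together with the monotonicity of $\sigma_k$. This reduces $\mu(a,k,b,l)$ to the largest absolute value of the piecewise-constant function $f=\sigma_{a,k,b,l}=a\sigma_k-b\sigma_l$, which jumps by $+2a$ at each point of $X_k$ and by $-2b$ at each point of $X_l$. The whole lemma then becomes a statement about where this lattice-style walk attains its extreme values, with the trichotomy on $b$ versus $qa$ governing the net drift.

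For (1) and (2) I would locate the extremal point explicitly. The key input is the value of $f$ on $X_l$: writing $k=ql+r$ and evaluating at $t=i/l$ gives $\sigma_k(i/l)=qi+\lfloor ri/l\rfloor_{k-qi}+1$, so that along $X_l$ the function $f$ is, up to the bounded term $a\lfloor ri/l\rfloor_{k-qi}$, the affine function $(qa-b)i+(a-b)$. When $b\le qa$ this affine part is nondecreasing, so $f$ drifts upward and its maximum is forced to the right endpoint $t=(k-2)/k$, where $\sigma_k=k-1$ and $\sigma_l=l-1$; this gives $f=a(k-1)-b(l-1)\ge0$, and a short argument bounding the downward excursions shows no intermediate point produces a larger absolute value, yielding part (1). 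For part (2), where $r=0$ and $b>qa$, one has $X_l\subseteq X_k$ and the clean identity $\sigma_k(i/l)=qi+1$, so $f(i/l)=(qa-b)i+(a-b)$ is strictly decreasing; its minimum over $X_l$ is at $t=(l-2)/l$, giving $f=a(k-2q+1)-b(l-1)<0$. The hypothesis $b>qa$ is exactly what makes $|f|$ at this valley dominate both the right-endpoint value $a(k-1)-b(l-1)$ and the $X_k$-only points (where $\sigma_k$ has run ahead, making $f$ less negative), so $\mu=-a(k-2q+1)+b(l-1)$.

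Part (3) only asserts a lower bound, so it suffices to exhibit a single point of $X_k\cup X_l$ at which $|f|$ is at least $\mu(b-qa,l,a,r)+qa$; this is where I expect the real work to lie. The strategy is to transport an extremizer of the smaller problem. Let $s^\ast\in X_l\cup X_r$ realize $\mu(b-qa,l,a,r)=|(b-qa)\sigma_l(s^\ast)-a\sigma_r(s^\ast)|$. The engine is a local descent identity coming from the Euclidean step $k=ql+r$: on a suitable window of $t$ one has $\sigma_k(t)=q\sigma_l(t)+\sigma_r(t)-q$, reflecting that $\sigma_k-q\sigma_l$ carries a copy of the $\sigma_r$ pattern (its average slope is $k-ql=r$). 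Granting the identity at a point $t^\ast$ lying in this window and corresponding to $s^\ast$, a direct substitution gives
\[
b\sigma_l(t^\ast)-a\sigma_k(t^\ast)=\big[(b-qa)\sigma_l(t^\ast)-a\sigma_r(t^\ast)\big]+qa,
\]
so choosing $t^\ast$ where the bracket equals $+\mu(b-qa,l,a,r)$ produces $|f(t^\ast)|=\mu(b-qa,l,a,r)+qa$, as required.

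The main obstacle is establishing and correctly invoking the descent identity. Unlike the clean pointwise formula used in (1)--(2), the identity $\sigma_k=q\sigma_l+\sigma_r-q$ holds only on restricted windows of $[0,1]$, and its derivation requires careful bookkeeping of the parity-sensitive floor $\lfloor\cdot\rfloor_k$ as $t$ crosses the interleaved points of $X_k$ and $X_l$. A second subtlety is that the extremizer $s^\ast$ may lie in $X_r$ rather than $X_l$, and points of $X_r$ are not generally points of $X_k\cup X_l$; so I will need either to arrange that $s^\ast$ lies in $X_l$, or to reindex an $X_r$-extremizer to a nearby point of $X_k\cup X_l$ within the descent window. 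Verifying that the chosen $t^\ast$ simultaneously lies in the window and realizes the $+\mu$ sign of the bracket is the crux. Once it is in hand, parts (1)--(3) combine with the already-established inequality $\beta\ge\mu$ and the geometric estimates of Lemma~\ref{lith1lemma} to force $\beta=\mu$ by induction on $k$.
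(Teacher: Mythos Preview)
Your overall structure is sound, but you are working much harder than necessary on (1) and (2), and you have correctly located---but not resolved---the real difficulty in (3).

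For (1) and (2), the paper observes at the outset that since $\beta(K)\ge\mu(a,k,b,l)$ always, and Lemma~\ref{lith1lemma} gives $\beta(K)\le\text{RHS}$ in each case, one automatically has $\mu(a,k,b,l)\le\text{RHS}$; hence it suffices to prove the \emph{inequality} $\mu\ge\text{RHS}$, which is a single evaluation. In (1) one just computes $\sigma_{a,k,b,l}$ at $t=1$ (equivalently at $(k-2)/k$), and in (2) at $t=(l-2)/l$. Your drift analysis, with its ``short argument bounding the downward excursions,'' is never needed; you are trying to prove directly that a given value \emph{is} the global extremum, when the upper bound on $\mu$ comes for free from the geometric side.

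For (3), your descent identity $\sigma_k=q\sigma_l+\sigma_r-q$ is exactly what the paper establishes pointwise on $X_l$ (Case 3a), and your algebraic manipulation is correct there. But the obstacle you flag---that the extremizer for $\mu(b-qa,l,a,r)$ may lie in $X_r$, which is not contained in $X_k\cup X_l$---is the entire content of the proof, and you have not addressed it beyond naming it. The paper's resolution is concrete: given $j/r\in X_r$, it sets $i=q(\lfloor jl/r\rfloor_l+2)+j$, checks that $i/k\in X_k$, and then proves the nontrivial identity $\lfloor il/k\rfloor_l=\lfloor jl/r\rfloor_l$, which yields $\sigma_{a,k,b,l}(i/k)=-\sigma_{b-qa,l,a,r}(j/r)+qa$. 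Note also that the split is by \emph{sign}, not merely by location: $X_l$-points give control of $\max\sigma_{b-qa,l,a,r}$ while the reindexed $X_r$-points give control of $\min\sigma_{b-qa,l,a,r}$, and both are needed to recover $\mu(b-qa,l,a,r)=\max|\sigma_{b-qa,l,a,r}|$. Your ``descent window'' heuristic does not supply this reindexing, and without it part (3) is not proved.
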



\subsection{Proof of Lemma~\ref{lith1lemma}} $\ $

The proof of statement (1) is the most straightforward. We consider $K = a T(2,k) \cs- bT(2,l)$, where $k = ql+r$ and $0\leq r <l$ and $b \leq aq$. 

Each Seifert surface for $T(2,k)$ contains a connected sum of $q$ copies of $T(2,l)$ bounding a subsurface.  Thus, the Seifert surface for $aT(2,k)$ contains the connected sum of $aq$ copies of $T(2,l)$ bounding a subsurface.  Therefore, since $b \le aq$, we can surger the canonical surface for $K = aT(2,k) \cs -bT(2,l)$, replacing a subsurface bounded by $bT(2,l) \cs -bT(2,l)$  of genus $b(l-1)$ with a slice disk. The resulting surface has first Betti number
$$a(k-1) +b(l-1) - 2b(l-1) = a(k-1) - b(l-1),  $$
as desired.

The proof of statement (2) is similar.  Now we can surger out Seifert surfaces for $\cs_{aq} (T(2,l) \cs-T(2,l))$.  These surfaces have total genus $aq(l-1)$.  Thus, the first Betti number  of the resulting surface in the 4--ball is $$a(k-1) + b(l-1) - 2aq(l-1)= -a(k-2q+1) + b(l-1),$$
as desired.

For statement (3), we again begin with $K = a T(2,k) \cs- bT(2,l)$, but this time we proceed differently from before. Instead of surgering subsurfaces from a Seifert surface for $K$, we perform a sequence of band moves to $K$. In particular, we can perform $aq$ band moves on $K$ so that each copy of $T(2,k)$ is connected to $q$ different copies of $-T(2,l)$. The resulting knot (or link) is $L = (b - qa)T(2,l)\cs - T(2,r)$. Therefore $K$ bounds a surface with first Betti number $\beta(L) + aq$, as desired.

\subsection{Proof of Lemma~\ref{lith2lemma}} $\ $ 

Before beginning the proofs of the three statements, note that using Lemma~\ref{lith1lemma} and the fact that $\beta(K) \ge \mu(K)$, we need prove only  inequalities, rather than equalities.\vskip.05in


\noindent{\bf Case (1)} For the proof of statement (1), we need to show that if $b \le qa$, then $$\mu(a,k,b,l) \ge a(k-1) - b(l-1).$$  
For this particular case, we can simply compute the value of the signature function $\sigma_K(t)$ at $t=1$ (called the {\em Murasugi signature}). This value is known to be $$\sigma_K(1) = a\sigma_{T(2,k)}(1) - b\sigma_{T(2,l)}(1) =  a(k-1) -b(l-1),$$ giving the desired bound:
$$ \mu(a,k,b,l) = \max_{t\in[0,1]}( |\sigma_K(t)|) \geq a(k-1) -b(l-1).$$
\vskip.05in

\noindent{\bf Case (2)} To prove statement (2), we consider the signature function evaluated at $t = \frac{l-2}{l} \in X_l$.  Using our notation from the previous section and the fact that $k = ql$, we have
\begin{eqnarray*}
 \sigma_{a,k,b,l}(\tfrac{l-2}{l}) &=& a( \lfloor k (\tfrac{l-2}{l}) \rfloor_k +1) - b (\lfloor l (\tfrac{l-2}{l}) \rfloor_l +1)\\
 &=&a \lfloor q (l-2)  \rfloor_{ql}   - b \lfloor   l-2  \rfloor_l +a - b\\
 &=& aq(l-2) - b(l-2) +a -b\\
  &=& a(k - 2q+1) - b(l-1)
 \end{eqnarray*}
 which equals the formula given in the statement of Lemma~\ref{lith2lemma}, modulo a change of sign to form the absolute value.
 Therefore we have
 $$ \mu(a,k,b,l) = \max_{t\in X_k\cup X_l}( |\sigma_{a,k,b,l} (t)| ) \geq -a(k - 2q+1) + b(l-1),$$
 as desired.

 \vskip.05in

\noindent{\bf Case (3)} The proof of statement (3) is more complicated. Once again, we are looking at the maximum value of $|\sigma_{a,k,b,l}(t)|$ for $t \in X_k \cup X_l$. Since $\sigma_{a,k,b,l}(t)$ can increase only at the points in $X_k$ and can decrease only at the points in $X_l$, it follows that the maximum value of $\sigma_{a,k,b,l} (t)$ is attained at  a point in $X_k$ and the minimum value is attained at a point in $X_l$. We break our argument into two parts, studying $t\in X_k$ and $t \in X_l$ separately. \vskip.05in

\noindent{\bf Case (3a)} Let us begin with $t \in X_l$; that is, $t = j/l$, where $j \equiv l \mod 2$ and $0\le j <l$.  Then, since $j \equiv l \mod 2$, we have $$k = ql+r \equiv qj +r \mod 2.$$
Using this fact, we have the following string of equalities.
\begin{equation*}
\begin{split}
-\sigma_{a,k,b,l}(j/l) & = -a(\lfloor qj +jr /l\rfloor_k +1) +b(j+1)\\
&=-a(qj + \lfloor jr/l\rfloor_r +1) +b(j+1)\\
&=(b-qa)(j+1) - a(\lfloor jr/l\rfloor_r +1) +qa\\
&=\sigma_{b-qa,l,a,r}(j/l) +qa.
\end{split}
\end{equation*}
Of these, the second line, in which we switch from $\lfloor\cdot \rfloor_k$ to $\lfloor\cdot \rfloor_r$, is not immediate.  This equality  follows most easily by considering the cases of $k \equiv r \mod 2$ (in which case $qj$ is even and the equality is immediate) and $k \not\equiv r \mod2$ (in which case we use  $\lfloor A + x \rfloor_k = A +  \lfloor x\rfloor_r$ for $A$ an odd integer). 

It now follows that $$\mu(a,k,b,l)=   \max_{t\in X_k\cup X_l}( |\sigma_{a,k,b,l} (t)| ) \geq  \max_{t\in X_l}( \sigma_{b-qa,l,a,r}(t)) +qa.$$
\vskip.05in

\noindent{\bf Case (3b)} We now consider the case $t \in X_k$. That is, $t = i/k$, where $i \equiv k \mod2$ and $0\le i < k$.  We will choose a particular value for $i$. Namely, let $j \equiv r \mod2$, $0\le j <r$ and let $$i = q(\lfloor jl/r\rfloor_l +2) +j.$$ 
We first want to verify both that $i \equiv k \mod2$ and that $0\le i < k$.  

First, observe: 
\begin{eqnarray*}
i = q(\lfloor jl/r\rfloor_l +2) +j
&\equiv& q\lfloor jl/r\rfloor_l +j \pmod{2}\\
&\equiv& ql +j \pmod{2}\\
&\equiv& ql +r  \pmod{2}\\
&=& k.
\end{eqnarray*}
Thus, $i \equiv k \pmod{2}$. 

Next for any positive number $x$, observe that $\lfloor x \rfloor_l \geq -1$. Therefore $$i = q(\lfloor jl/r\rfloor_l +2) +j \geq 0.$$
Lastly, we want to show that $i < k$. Observe first that $\lfloor jl/r\rfloor_l \le l-2$, since $j < r$.  Thus, $i \le ql +j$.  Again using the fact that $j<r$, this gives $i < ql +r = k$. \vskip.05in

Now we want to compute $\sigma_{a,k,b,l}(i/k)$. We begin as follows:
$$\sigma_{a,k,b,l}(i/k) = a(i+1) - b(\lfloor il/k \rfloor_l +1). $$
We claim that $\lfloor il/k \rfloor_l = \lfloor jl/r\rfloor_l $. Assuming this claim for now and substituting the chosen value for $i$, we can finish off the computation of $\sigma_{a,k,b,l}(i/k)$ as follows: 

\begin{equation*}
\begin{split}
\sigma_{a,k,b,l}(i/k) &= a(i+1) - b(\lfloor il/k \rfloor_l +1) \\
& = a(q\lfloor jl/r \rfloor_l + 2q + j +1) - b(\lfloor j l/r\rfloor_l +1)\\
&= -(b-qa)(\lfloor jl/r \rfloor_l +1) + a(j+1) +qa\\
&= -\sigma_{b-qa,l,a,r}(j/r) +qa.
\end{split}
\end{equation*}
We conclude that $$\mu(a,k,b,l) =   \max_{t\in X_k\cup X_l}( |\sigma_{a,k,b,l} (t)| ) \ge \max_{t \in X_r}(-\sigma_{b-qa, l,a,r}(t)) +qa.$$ 

Putting the conclusions of Cases (3a) and (3b) together, we have:
\begin{eqnarray*}
\mu(a,k,b,l) &\ge& \max_{t \in X_l \cup X_r}(|\sigma_{b-qa, l,a,r}(t)|) +qa\\
&=& \mu(b-qa, l,a,r) + qa,
\end{eqnarray*}
as desired.

Thus it only remains to show that $\lfloor il/k \rfloor_l = \lfloor jl/r\rfloor_l $.   First, consider the fraction $il/k$.

\begin{equation*}
\begin{split}
il/k
&= \tfrac{1}{k} [ql(\lfloor jl/r\rfloor_l +2) +jl ]\\
&= \tfrac{1}{k} [(k-r)(\lfloor jl/r\rfloor_l +2) +jl ]\\
&=\lfloor jl/r \rfloor_l +2 - \tfrac{1}{k}[r(\lfloor jl/r \rfloor_l +2) - jl]\\
\end{split}
\end{equation*}
Now the right hand side of the equation has become complicated, but we will see that the value of the expression $ \tfrac{1}{k}[r(\lfloor jl/r \rfloor_l +2) - jl]$ is small. Consider the following basic fact:
$$ jl/r < \lfloor jl /r \rfloor_l +2 \le jl/r +2.$$
From this, we have $$ 0 < r(\lfloor jl/r \rfloor_l +2) - jl \le 2r.$$
Moreover, since $r <k$, it follows that $$   0 < \tfrac{1}{k}[r(\lfloor jl/r \rfloor_l +2) - jl]  < 2.$$ Returning to our previous computation and using this result, we have

\begin{equation*}
\begin{split}
il/k   &= \lfloor jl/r \rfloor_l +2 - \tfrac{1}{k}[r(\lfloor jl/r \rfloor_l +2) - jl]\\
&=  \lfloor jl/r \rfloor_l  + \epsilon,
\end{split}
\end{equation*}
where $\epsilon \in (0,2)$. It follows that  $\lfloor il/k \rfloor_l = \lfloor jl/r\rfloor_l $, as desired.


\section{A  family in which Upsilon determines four-genus.}\label{section-purely-upsilon}
In this section we present an infinite family of knots for which we can realize the lower bound on the four-genus that arises from Upsilon.  We are also able to completely determine values of $t$ for which $g_4(K) = |\U_K(t)/t|$ and observe how that maximizer $t$ depends on $a$ and $b$.
 
We first need some observations about the function $\U_{T(p,q)}(t)$. In general, the function $\U_{T(p,q)}(t)$ is determined by an inductive formula in~\cite[Theorem 1.15]{oss}. In the special cases $T(p, p+1)$ and $T(2,q)$, the functions have been (in part or in whole) computed concretely by Ozsv\'ath, Stipsicz, and Szab\'o~\cite{oss}. In addition, Feller~\cite{feller2} explicitly determined the function $\U_{T(p,q)}(t)$ for the cases $p = 3$ or $4$. For our purposes, we consider the general case $\U_{T(p,q)}(t)$ and determine the value of the function through the first two singularities of the function. 

\begin{utheorem}[Proved in Appendix A]
Consider the torus knot $T(p,q)$ where $p<q$. We write $q = kp+d$ where $0< d <p$. The first singularity of $\U_{T(p,q)}(t)$ is at $t_1 = \tfrac{2}{p}$, and the second singularity is at $t_2 = \tfrac{4}{p}$ if $d \leq \tfrac{p}{2}$ and at $t_2 = \tfrac{2}{d}$ if $d\geq \tfrac{p}{2}$.  Moreover, the values of $\U_{T(p,q)}(t)$ on the interval $[0,t_2]$ are as follows:

$$\U_{T(p,q)}(t)  = \begin{cases}
-\tfrac{1}{2}(p-1)(q-1)t & \text{ for all } t\in[0,\tfrac{2}{p}]\\
 - \left[\tfrac{1}{2}(p-1)(q-1) - kp\right]t - 2k & \text{ for all } t\in[\tfrac{2}{p}, t_2]
 \end{cases}
 $$
\end{utheorem}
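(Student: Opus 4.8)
The plan is to compute $\U_{T(p,q)}(t)$ directly from the semigroup description, which is the most concrete route through the first two singularities. Recall that for a torus knot $T(p,q)$ with $p<q$, the relevant combinatorial data is the numerical semigroup $S$ generated by $p$ and $q$, and the Upsilon function can be read off from the Alexander polynomial or, equivalently, from the ordered gap/non-gap sequence of $S$. The algorithm of \cite{borodzik-livingston, oss} expresses $\U_{T(p,q)}(t)$ as a minimax over lattice points determined by $S$; for small $t$ the minimizing contribution comes from the generator structure near the bottom of the semigroup, so the first two linear pieces should be governed only by the smallest few elements of $S$, namely $0, p, 2p, \ldots$ and $q, q+p, \ldots$. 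First I would fix the semigroup-theoretic formula for $\U$ and identify precisely which lattice point realizes the minimum on each of the two intervals.

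Next I would verify the first singularity and the initial slope. On $[0,\tfrac{2}{p}]$ the $\tau$-invariant controls the function: since $\tau(T(p,q)) = g_3(T(p,q)) = \tfrac{1}{2}(p-1)(q-1)$ and $\U_K(t) = -\tau(K)\,t$ for small $t$ (with the sign convention matching the statement), the first linear piece is immediate, and the content is showing that this linear behavior persists all the way to $t_1 = \tfrac{2}{p}$ and not before. This amounts to checking that the lattice point giving the $\tau$-slope remains the minimizer until $t=\tfrac{2}{p}$, where a competing point takes over. I would pin down $t_1 = \tfrac{2}{p}$ by exhibiting the two lattice points whose contributions cross exactly there.

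For the second interval I would identify the new minimizing lattice point and compute its linear contribution, obtaining slope $-[\tfrac{1}{2}(p-1)(q-1) - kp]$ and intercept $-2k$; continuity at $t_1 = \tfrac{2}{p}$ gives a useful consistency check, since both formulas must agree there (one verifies $-\tfrac{1}{2}(p-1)(q-1)\cdot\tfrac{2}{p} = -[\tfrac{1}{2}(p-1)(q-1)-kp]\cdot\tfrac{2}{p} - 2k$, which reduces to $kp\cdot\tfrac{2}{p} = 2k$). The writing of $q = kp + d$ with $0 < d < p$ is what lets me express the relevant semigroup elements and the location of the second crossing explicitly. The case split on whether $d \le \tfrac{p}{2}$ or $d \ge \tfrac{p}{2}$ reflects a competition between two candidate successor points: one coming from adding another multiple of $p$ (giving $t_2 = \tfrac{4}{p}$) and one coming from the element $q$ interacting with the gap structure (giving $t_2 = \tfrac{2}{d}$); whichever crossing occurs first determines where the second linear piece ends.

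The main obstacle will be the bookkeeping of the semigroup/lattice-point combinatorics: correctly cataloguing which points are candidate minimizers on each interval and proving that no other lattice point ever undercuts the claimed minimizer before $t_2$. In particular, the case analysis distinguishing $d \le \tfrac{p}{2}$ from $d \ge \tfrac{p}{2}$ requires comparing the two competing slopes/crossing times carefully, and ruling out a third point that could produce an earlier singularity. Since this computation is self-contained and somewhat technical, it is natural that the authors defer it to the appendix; I would organize the argument around the explicit enumeration of the first few semigroup generators and a direct comparison of the affine functions they contribute, letting continuity at each singularity serve as the guiding constraint.
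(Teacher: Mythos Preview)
Your proposal is correct and follows essentially the same approach as the paper: both compute $\U_{T(p,q)}$ from the staircase associated to the semigroup generated by $p$ and $q$, identify the relevant lattice points near the bottom of the semigroup (namely $0,p,2p,\ldots,kp,q,(k+1)p,\ldots$), and check which one realizes the minimum on each interval. The one organizational device the paper uses that you might find streamlines your ``ruling out other points'' step is a short lemma that passing to a subsequence $S\subset S(p,q)$ can only decrease $U_S$, so one computes $U$ exactly on the simpler subsequence above and then argues that the omitted semigroup elements---all larger than $(k+1)p$---do not affect the staircase in the range governing small $t$.
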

\vspace{.3cm}
 
Using the above result, we have the following computations:
 
\begin{lemma}\label{computation} Let $K= aT(p,qr)\cs -bT(q,pr)$, where $a , b>0$, $p < q$, and $r < \frac{q}{q-p}$.
\begin{itemize}
\item  $|\tau(K)| =| \frac{a}{2}(p-1)(qr-1) - \frac{b}{2}((q-1)(pr-1)|$.\vskip.05in
\item $ |\frac{p}{2}\cdot\U_K(\frac{2}{p})| =  | \frac{a}{2}(p-1)(qr-1) -  b[\tfrac{1}{2}(q-1)(pr-1) +  (p-q)(r-1)]|. $\vskip.05in
\end{itemize}
\end{lemma}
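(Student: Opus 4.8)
The plan is to compute each of the two quantities directly from the definitions of $\tau$ and $\U_K$, exploiting the additivity of both invariants under connected sum together with the explicit values supplied by the boxed Theorem. Since $K = aT(p,qr) \cs -bT(q,pr)$, both $\tau$ and $\U_K(t)$ decompose as $a$ times the value for $T(p,qr)$ minus $b$ times the value for $T(q,pr)$. So the entire computation reduces to evaluating the relevant invariants on the two individual torus knots.

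For the first bullet, I would recall that $\tau(T(p,q)) = g_3(T(p,q)) = \tfrac12(p-1)(q-1)$ for a positive torus knot, and that $\tau$ is additive. Thus $\tau(K) = \tfrac{a}{2}(p-1)(qr-1) - \tfrac{b}{2}(q-1)(pr-1)$, and taking absolute values gives the stated formula. This is essentially immediate. For the second bullet, the key input is that both torus knots have their first Upsilon singularity at the \emph{same} value $t = 2/p$: for $T(p,qr)$ this is clear from the boxed Theorem (its first parameter is $p$), and for $T(q,pr)$ I must check that $2/p$ still lies in the initial linear regime, i.e. that $2/p \le 2/q \cdot(\text{something})$ is not needed but rather that $2/p$ falls at or beyond the first singularity $2/q$ of $T(q,pr)$ and before its second singularity $t_2$. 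Here the hypotheses $p<q$ and $r < \tfrac{q}{q-p}$ enter: they guarantee that $2/p$ lands in the second linear piece of $\U_{T(q,pr)}$ rather than past its second singularity.

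Concretely, I would evaluate $\U_{T(p,qr)}(2/p)$ using the first case of the boxed formula (the point $2/p$ is the endpoint of the first linear piece), obtaining $-\tfrac12(p-1)(qr-1)\cdot\tfrac{2}{p}$. For $T(q,pr)$ I write $pr = kq + d$ with $0<d<q$ and evaluate $\U_{T(q,pr)}(2/p)$ using the \emph{second} linear piece from the boxed Theorem, namely $-[\tfrac12(q-1)(pr-1) - kq]t - 2k$ at $t = 2/p$. Multiplying the combined value of $\U_K(2/p)$ by $p/2$ and simplifying should collapse the $k$-dependent terms into the clean expression $(p-q)(r-1)$ appearing in the statement. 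The division from $pr = kq+d$ gives $k = \lfloor pr/q \rfloor$, and I expect the arithmetic identity $kq + d = pr$ together with $d$'s value to let the messy terms telescope.

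The main obstacle will be the bookkeeping in the second bullet: I must verify carefully that $2/p$ genuinely lies in the interval $[2/q, t_2]$ for $T(q,pr)$, which is exactly where the hypothesis $r < \tfrac{q}{q-p}$ is used (it controls whether $2/p \le t_2$, and the split $d \lessgtr q/2$ in the boxed Theorem may require a short case check to confirm $t_2 \ge 2/p$ in both subcases). Once the correct linear piece is identified, the remaining simplification is routine algebra, and I would confirm that the term $\tfrac{p}{2}\cdot(-2k) + (\text{slope terms})$ reorganizes into $b(p-q)(r-1)$ as claimed. I do not anticipate any conceptual difficulty beyond correctly placing $2/p$ relative to the singularities and tracking signs through the absolute value.
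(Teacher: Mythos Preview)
Your proposal is correct and follows essentially the same approach as the paper: additivity of $\tau$ and $\U$, evaluating $\U_{T(p,qr)}$ at the endpoint $2/p$ of its first linear piece, placing $2/p$ in the interval $[2/q,t_2]$ for $\U_{T(q,pr)}$ via a case split on $t_2$, and then simplifying. The one point you leave implicit that the paper makes explicit is the identification $k=r-1$ (from $(r-1)q<pr<rq$, which is exactly the content of $p<q$ and $r<\tfrac{q}{q-p}$); once you write this down, the ``telescoping'' you anticipate is immediate, since $kq\cdot\tfrac{2}{p}\cdot\tfrac{p}{2}-2k\cdot\tfrac{p}{2}=(q-p)(r-1)=-(p-q)(r-1)$.
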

\begin{proof}

For a torus knot $T(p,q)$, the value of $\tau$ is computed as follows: $\tau(T(p,q)) = \tfrac{1}{2}(p-1)(q-1)$. Using this and the fact that $\tau$ is additive over connected sum gives the first equality for $|\tau(K)|$.

The computation of $ |\frac{p}{2}\cdot\U_K(\frac{2}{p})|$ takes more care. By the additivity of $\Upsilon_K$ over connected sum, we know that
$$ |\tfrac{p}{2}\cdot\U_K(\tfrac{2}{p})| = \tfrac{p}{2} |a\U_{T(p,qr)}(\tfrac{2}{p}) - b\U_{T(q,pr)}(\tfrac{2}{p})|.$$
Therefore, it suffices to compute $\U_{T(p,qr)}(\tfrac{2}{p})$ and $\U_{T(q,pr)}(\tfrac{2}{p})$ separately. 

From the previous theorem, it follows that 
 \begin{eqnarray*}
 \U_{T(p,qr)}(\tfrac{2}{p}) &=& -\tfrac{1}{2}(p-1)(qr-1)\tfrac{2}{p}\\
 & =& -\tfrac{1}{p}(p-1)(qr-1).
 \end{eqnarray*}

Next, we consider $\U_{T(q,pr)}(t)$. Again, we would like to evaluate this function at $t = \tfrac{2}{p}$. We show that  regardless of which of the two possible values the second singularity $t_2$ is, the value $\tfrac{2}{p}$ is contained in the interval $[\tfrac{2}{q}, t_2]$.

First, suppose the second singularity $t_2$ is at $\tfrac{4}{q}$. We show that $\tfrac{2}{p} \leq \tfrac{4}{q}$. This inequality is equivalent to $q \leq 2p$.  However, if $q > 2p$, then $p < \tfrac{q}{2}$, so  $r< \tfrac{q}{q-p} < \tfrac{q}{q - q/2} = 2$, which is impossible. Therefore $\tfrac{2}{p} \in [\tfrac{2}{q}, \tfrac{4}{q}]$.

Now suppose that the singularity $t_2$ is the other alternative, namely,  $t_2 = \tfrac{2}{d}$, where $d$ is computed as follows. Our assumptions that $p < q$ and $r < \tfrac{q}{q-p}$ imply that $(r-1)q < pr< qr$. Therefore, in terms of the notation in the previous theorem, $k = r-1$ and $d = pr - (r-1)q$.  So we want to show that $\tfrac{2}{p} \leq \tfrac{2}{d}$. This is equivalent to $d \leq p$. But this inequality is an immediate consequence of $p< q$. Hence, regardless of the value of the second singularity $t_2$, we know that $\tfrac{2}{p} \in [\tfrac{2}{q}, t_2]$, so we can evaluate $\U_{T(q,pr)}(\tfrac{2}{p})$ using the formula in the theorem above. 

$$\U_{T(q,pr)}(\tfrac{2}{p}) = -\left[\tfrac{1}{2}(q-1)(pr-1) - (r-1)q\right](\tfrac{2}{p}) - 2(r-1).$$

What remains is an algebraic manipulation:

\begin{eqnarray*}
|\tfrac{p}{2}\cdot\U_K(\tfrac{2}{p})| &=& \tfrac{p}{2} |a\U_{T(p,qr)}(\tfrac{2}{p}) - b\U_{T(q,pr)}(\tfrac{2}{p})|\\
&=& |-\tfrac{1}{2}a(p-1)(qr-1)  + b\left[\tfrac{1}{2}(q-1)(pr-1) - (r-1)q + p(r-1)\right]|\\
&=& |\tfrac{1}{2}a(p-1)(qr-1)  - b\left[\tfrac{1}{2}(q-1)(pr-1) + (p-q)(r-1) \right]|.
\end{eqnarray*}

\end{proof}

The above computations are precisely what we need to determine the 4-genus of this family of knots. 

\begin{theorem}  Let $K = aT(p,qr)\cs  -bT(q,pr)$ where $a, b >0$, $0 < p < q$, and $r< \tfrac{q}{q-p}$.  Then
$$g_4(K) = \begin{cases} 
|\tau(K)| &\mbox{if }  a\leq b \\
 | \tfrac{p}{2} \U_K(\tfrac{2}{p})|& \mbox{if } a\geq b. \\
 
\end{cases} 
$$
Furthermore, if $a= b$, then $  \tau(K) = -  \frac{p}{2} | \U_K(\frac{2}{p})|$; if $a >b$, then $  |\tau(K)| <  \frac{p}{2} | \U_K(\frac{2}{p})|$.
\end{theorem}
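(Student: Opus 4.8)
The plan is to show that $g_4(K)$ is the larger of the two lower bounds supplied by $\tau$ and by $\U_K(\tfrac2p)$, and that this larger bound is $|\tau(K)|$ exactly when $a\le b$. Write $T=\tau(K)$ and $U=\tfrac p2\U_K(\tfrac2p)$ for the \emph{signed} quantities whose absolute values are computed in Lemma~\ref{computation}, and set $S=(q-p)(r-1)$. The two bounds $g_4(K)\ge|T|$ and $g_4(K)\ge|U|$ are immediate from the listed properties of the invariants: $g_4(K)\ge|\U_K(t)/t|$ applied at $t=\tfrac2p\in(0,1]$ gives the second, and $\U_K(t)/t=\tau(K)$ near $0$ gives the first. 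The entire theorem then rests on the single identity $U=T+bS$, obtained from Lemma~\ref{computation} by the one-line expansion $(q-1)(pr-1)-(p-1)(qr-1)=(q-p)(r-1)=S$; note also that $\gcd(r,p)=\gcd(r,q)=1$ forces $S$ to be even, so $\tfrac S2\in\zz$.

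The comparison of the two bounds and the ``Furthermore'' clauses come directly from this identity. Since $U-T=bS\ge0$,
$$|U|^2-|T|^2=(U-T)(U+T)=bS\,(U+T),$$
and $U+T=2T+bS$ is linear in $a$ with positive leading coefficient $(p-1)(qr-1)$ and vanishes at $a=b$ (again by the formula for $S$). Hence $U+T$ has the sign of $a-b$, so $|U|\ge|T|$ exactly when $a\ge b$, strictly when $a\ne b$. This gives $\max(|T|,|U|)=|T|$ for $a\le b$, $\max(|T|,|U|)=|U|$ for $a\ge b$, and the strict inequality $|\tau(K)|<\tfrac p2|\U_K(\tfrac2p)|$ for $a>b$. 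At $a=b$ we get $U+T=0$ with $U-T=bS\ge0$, so $T=-U\le0$ and therefore $\tau(K)=-\tfrac p2|\U_K(\tfrac2p)|$.

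For the matching upper bound, I would first isolate the geometric heart as the single cobordism lemma
$$g_4\bigl(T(p,qr)\cs -T(q,pr)\bigr)\le \tfrac12(q-p)(r-1)=\tfrac S2,$$
which is the $a=b=1$ case. Granting it, the general upper bound follows from subadditivity of $g_4$ under connected sum together with $g_4(T(p,qr))=\tfrac12(p-1)(qr-1)$ and $g_4(T(q,pr))=\tfrac12(q-1)(pr-1)$ (Kronheimer--Mrowka, and $g_4$ is insensitive to orientation reversal). For $a\le b$, regroup $K=a\bigl(T(p,qr)\cs -T(q,pr)\bigr)\cs (b-a)\bigl(-T(q,pr)\bigr)$ to get
$$g_4(K)\le a\cdot\tfrac S2+(b-a)\cdot\tfrac12(q-1)(pr-1)=-T=|T|,$$
and for $a\ge b$, regroup $K=b\bigl(T(p,qr)\cs -T(q,pr)\bigr)\cs (a-b)\,T(p,qr)$ to get
$$g_4(K)\le b\cdot\tfrac S2+(a-b)\cdot\tfrac12(p-1)(qr-1)=U=|U|.$$
Both right-hand sides collapse to the stated values using $S=(q-1)(pr-1)-(p-1)(qr-1)$ (and $T\le0$, $U\ge0$ in the respective ranges). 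Combined with the lower bounds these pin down $g_4(K)=\max(|T|,|U|)$, which is exactly the claimed case split.

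The hard part will be the cobordism lemma. I would prove it by exhibiting an explicit orientable cobordism from $T(p,qr)$ to $T(q,pr)$ built from $S=(q-p)(r-1)$ oriented band moves (a surface of first Betti number $S$, hence genus $\tfrac S2$); the band count is exactly what Euler-characteristic bookkeeping permits, since $S$ is even and the genus gap between these two fibered torus knots is $\tfrac S2$. Producing the bands cleanly is the real obstacle: the most promising routes are the subsurface-surgery technique used for the $(2,k)$ knots in Section~\ref{section:litherlandthem}, now applied to the fiber surfaces of $T(p,qr)$ and $T(q,pr)$, or a direct braid/torus-diagram manipulation. Finally I would verify that the hypothesis $r<\tfrac{q}{q-p}$ (equivalently $q<2p$ once $r\ge2$) enters only through Lemma~\ref{computation}, where it keeps $\tfrac2p$ on the correct linear segment of $\U_{T(q,pr)}$ so that $U$ genuinely records the Upsilon bound, and is not needed for the cobordism construction itself.
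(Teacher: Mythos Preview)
Your algebraic framework is correct and in fact tidier than the paper's: the identity $U=T+bS$ with $S=(q-p)(r-1)$, and the observation $U+T=(a-b)(p-1)(qr-1)$, immediately dispatch the comparison of the two bounds and the ``Furthermore'' clauses. Your parity argument for $S$ is right, and the subadditivity reduction to the single ``cobordism lemma'' $g_4\bigl(T(p,qr)\cs-T(q,pr)\bigr)\le\tfrac S2$ is valid and equivalent to the paper's surgery.

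The genuine gap is that you do not prove this cobordism lemma; you only indicate that one should look for band moves or a subsurface argument. This is precisely the step the paper does \emph{not} leave to the reader: it invokes a theorem of Baader~\cite[Proposition~1]{baader1}, which asserts that the fiber surface of $T(q,pr)$ contains a copy of the fiber surface of $T(p,qr)$. That containment immediately yields a genus-$\tfrac S2$ cobordism (take the complementary subsurface), and in the paper's formulation it lets one surger the slice disk for $aT(p,qr)\cs-aT(p,qr)$ (resp.\ $bT(p,qr)\cs-bT(p,qr)$) directly out of the canonical Seifert surface for $K$. Your intuition that the subsurface-surgery technique of Section~\ref{section:litherlandthem} is the right idea is correct, but producing the embedding of one fiber surface in the other by hand is nontrivial and is exactly what Baader's result supplies. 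Your final remark---that the hypothesis $r<\tfrac{q}{q-p}$ is needed only to place $\tfrac2p$ on the correct linear piece of $\U_{T(q,pr)}$ in Lemma~\ref{computation} and not for the geometric construction---is also correct.
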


\begin{proof}
First of all, the last line of the statement of the theorem follows immediately from Lemma~\ref{computation}. 

Before we face the rest of the theorem's statement, we make note of a result of Baader~\cite[Proposition 1]{baader1} that implies that the fiber surface for $T(q,pr)$ contains a fiber surface for $T(p,qr)$.

Thus, for the case $a \le b$, starting with the canonical Seifert surface for $K$, we can cut out the fiber surface for $aT(p,qr) \cs -aT(p,qr)$ and replace it with a slice disk. This creates a surface of genus $$\tfrac{a}{2}(p-1)(qr-1) +\tfrac{b}{2}(q-1)(pr-1) - 2 (\tfrac{a}{2}(p-1)(qr-1)).$$  This simplifies to $$\tfrac{b}{2}(q-1)(pr-1) - \tfrac{a}{2}(p-1)(qr-1),$$  which equals $|\tau(K)|$(as stated in Lemma~\ref{computation}).

For the case $a\geq b$, we again start with the canonical Seifert surface for $K$. Using the result of Baader, we can cut out a Seifert surface for $bT(p,qr) \cs -bT(p,qr)$ and replace it with a slice disk. This creates a surface of genus 
$$\tfrac{a}{2}(p-1)(qr-1) +\tfrac{b}{2}(q-1)(pr-1) - 2 (\tfrac{b}{2}(p-1)(qr-1)),$$  
which simplifies to equal $ |\frac{p}{2}\cdot \U_K(\frac{2}{p})| $ (as stated in Lemma~\ref{computation}).

\end{proof}


\section{Mixed example}\label{mixed-example}
Now we consider a single example, described in the introduction as Theorem~\ref{example3438}, consisting of linear combinations    $aT(3,4) \# - bT(3,8)$.  These have the  interesting property that we can always realize the best lower bound on the four-genus of $aT(3,4) \cs -bT(3,8)$ that arises from signatures and Upsilon; however, for $a \le 2b$, the best bound comes from the Upsilon function, whereas for $a \ge 2b$, the best bound comes from  the signature function.  

\begin{proof}[Proof of Theorem~\ref{example3438}]
Let $K = aT(3,4) \cs -bT(3,8)$.  The lower bounds on four-genus coming from the Upsilon function and the signature function are as follows (for references on how this is done, see~\cite{feller2, goldsmith, litherland, oss}): 

$\textrm{max}_{t \in (0,1]}\{\frac{1}{t}|\Upsilon_{K}(t)|\} =  
\begin{cases} 
{\bf 7b-3a}&{\bf \mbox{\bf if } 0\leq a< 2b} \\
5b-2a& \mbox{if } 2b \leq a < \frac{12}{5}b \\
3a - 7b & \mbox{if } a \geq  \frac{12}{5} b,
\end{cases}  $\\

\vspace{.5cm}

$\textrm{max}_{t \in [0,1]}\{\tfrac{1}{2}\sigma_{K}(t)\} =  
\begin{cases} 
6b-3a &\mbox{if } 0\leq a< b \\
5b - 2a & \mbox{if } b \leq a < 2b \\
{\bf 3a - 5b} & \mbox{\bf if }  {\bf a \geq 2b}.
\end{cases}  $\\

One can observe that the Upsilon bound is stronger than the signature bound for $0\leq a\leq 2b$, and then for $a\geq 2b$, the signature bound is the stronger of the two. 

Now suppose that $0\leq a \leq 2b$. The canonical Seifert surface for $K$ has genus $3a + 7b$. This surface contains a subsurface which has boundary equal to $aT(3,4)\cs - aT(3,4)$. Since this knot is slice, we can cut out this subsurface and glue in a disk in $B^4$  in its place. The subsurface which we removed had genus $6a$. Therefore our newly formed surface has genus $7b - 3a$, and so it follows that $g_4(K) \leq 7b-3a$. Combining this with the Upsilon lower bound, we obtain $g_4(K) = 7b-3a$ for $0\leq a \leq 2b$, as desired.

Next, suppose that $a\geq 2b$. Once again we begin with the canonical Seifert surface for $K$. This surface contains a subsurface which has boundary $2bT(3,4) \cs -2bT(3,4)$. Cut out this subsurface and glue in  a disk in its place. The subsurface which we removed had genus $12b$. Therefore our newly formed surface has genus $3a - 5b$, and it follows that $g_4(K) \leq 3a-5b$ for $a \geq 2b$. Combining this with the signature lower bound on four-genus, we conclude that $g_4(K) = 3a-5b$ for  $a \geq 2b$. 
\end{proof}

Figure~\ref{fig:3438} in the introduction illustrates the lower bounds on the stable four-genus of $tT(3,4) \cs -(1-t)T(3,8)$ for $t \in [0,1]$; the thick (blue) line represents the lower bound obtained from $\U_K(t)$ and the thin (red) line represents the bound obtained from $\sigma_K(t)$.

Similar work shows that the families arising from $T(3,5)$ and either $T(3,10), T(3,20)$, or $T(3,25)$, have the same property as the above example -- namely, the four-genus of all linear combinations is determined using both the Upsilon and signature lower bounds together.

\section{Generalizing to $K = aT(p,q) \cs -bT(p',q')$}\label{generalizing}

In spite of the success detailed thus far in using signature and Upsilon invariants, we do not have to look far to find examples where these invariants and our best geometric realizations are not sufficient to compute the four-genus. Since Theorem~\ref{thm:litherland} resolves all knots of the form $aT(2,k) \cs -bT(2,l)$, the first possible examples of our limitations would be knots of the form $aT(2,k) \cs -bT(3,l)$ or $aT(2,k) \cs -bT(4,l)$. Indeed, though the Upsilon function resolves a subset of these knots, many unknown cases remain. 

\subsection{Special cases of $aT(2,k) \cs -b T(3,l)$ and $aT(2,k) \cs -b T(4,l)$}
We begin by detailing the knots for which our methods remain sufficient. 

 \begin{theorem}\label{subfamily}
  Let   $ r \ge  1$, and let $K$ be any of the following knots:
 \begin{enumerate}
\item $ aT(2,10r+1)\cs-bT(3,6r+1) $.\vskip.05in

\item $aT(2,10r+3)\cs-bT(3,6r+2) $.\vskip.05in

\item $aT(2,10r+1)\cs-bT(4,4r+1)$.\vskip.05in
\end{enumerate}
Then the 4-genus of $K$ is equal to $|\tau(K)|$ for $a \le b$ and equals $|\U_K(1)|$ for $a\ge b$.
\end{theorem}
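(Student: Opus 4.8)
The plan is to mirror the structure of the two fully worked examples (Theorem~\ref{theorem:pqr} and Theorem~\ref{example3438}): reduce the statement to a matching pair of inequalities, one lower bound coming from $|\tau(K)|$ and $|\U_K(1)|$, and one upper bound coming from an explicit geometric surgery on the canonical Seifert surface. The upper bound direction is where the content lies; the lower bounds are automatic consequences of the additivity of $\tau$ and $\U_K$ together with the genus bounds $g_4(K)\ge|\tau(K)|$ and $g_4(K)\ge|\U_K(1)|$ recorded in Sections~\ref{section:signatures} and~\ref{section:upsilon}.

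First I would carry out the invariant computations for each of the three families. Since $\tau(T(p,q))=\tfrac12(p-1)(q-1)$ is additive, $\tau(K)=\tfrac{a}{2}(p-1)(q_1-1)-\tfrac{b}{2}(p'-1)(q'-1)$ is immediate in each case. For $\U_K(1)=\upsilon(K)$ I would evaluate the Upsilon function of each torus knot at $t=1$; because $t=1$ may lie beyond the first singularity $t_1=\tfrac{2}{p}$, I would use the two-singularity formula from the Theorem of Appendix~A (or the explicit $p=3,4$ computations of Feller~\cite{feller2}) to read off $\U_{T(2,10r+1)}(1)$, $\U_{T(3,6r+1)}(1)$, etc. The parameters in (1)--(3) are chosen precisely so that $t=1$ is a clean point at which these piecewise-linear formulas apply; checking that $t=1$ falls in the correct linear segment for each summand is a bookkeeping step I would verify family by family.

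Next I would establish the upper bounds by surgery, exactly as in the proofs of Theorems~\ref{theorem:pqr} and~\ref{example3438}. The key geometric input is a containment of fiber surfaces: Baader's result~\cite{baader1} gives, in the $pqr$-type families, that the fiber surface for the larger torus knot contains a fiber surface for the smaller one, so that starting from the canonical Seifert surface for $K$ one can excise a subsurface bounded by a slice knot of the form $cJ\cs-cJ$ and cap it with a slice disk. For $a\le b$ one excises $aJ\cs-aJ$ where $J$ is the appropriate common torus knot, producing a surface whose genus equals $|\tau(K)|$; for $a\ge b$ one excises $bJ\cs-bJ$, producing a surface whose genus equals $|\U_K(1)|$. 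The arithmetic that these two genus expressions match the invariant computations from the previous paragraph is the analogue of Lemma~\ref{computation} and would be carried out separately for each of the three families.

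The main obstacle I expect is verifying the existence of the required subsurface containment in families (1)--(3), which are \emph{not} of the $T(p,qr)\cs-T(q,pr)$ form to which Baader's proposition applies directly. For $aT(2,10r+1)\cs-bT(3,6r+1)$ and its siblings the two summands have different first indices, so one cannot invoke the fiber-containment statement verbatim; instead I would need either a band-move argument in the spirit of Case (3) of Lemma~\ref{lith1lemma} — performing explicit bands to reduce $K$ to a connected sum of a slice knot and a controlled remainder — or a direct identification of a genus-$|\tau|$ (respectively genus-$|\upsilon|$) cobordism. Establishing this geometric realization, and confirming that the resulting upper bound exactly meets the lower bound from $\tau$ or $\upsilon$ with no gap, is the crux of the argument; once it is in place, the equality $g_4(K)=|\tau(K)|$ for $a\le b$ and $g_4(K)=|\U_K(1)|$ for $a\ge b$ follows by squeezing between the matching bounds.
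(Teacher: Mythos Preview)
Your overall architecture is exactly the paper's: lower bounds from $\tau$ and $\upsilon$, upper bounds from excising a slice subsurface $cJ\cs-cJ$ from the canonical Seifert surface, and arithmetic matching the two. You have also correctly put your finger on the only nontrivial step, namely the fiber-surface containment between torus knots with different first indices.

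What you are missing is the specific result that fills that gap. The paper does not use Baader's $T(p,qr)$-versus-$T(q,pr)$ containment here, nor band moves; it invokes Feller's Propositions~22 and~23 in~\cite{feller2}, which state in particular that the fiber surface of $T(3,m)$ contains a fiber surface of $T(2,n)$ whenever $n\le \tfrac{5m-1}{3}$, with an analogous statement for $T(4,m)$. The parameters in families (1)--(3) are calibrated to lie exactly at or below these thresholds (for instance $10r+1\le \tfrac{5(6r+1)-1}{3}$ and $10r+3=\tfrac{5(6r+2)-1}{3}$), so the surgery goes through verbatim with $J=T(2,10r+1)$ or $T(2,10r+3)$. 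Likewise, rather than Appendix~A the paper reads off $\U_K(1)$ from Feller's Proposition~28 in~\cite{feller2}, which gives $\upsilon$ explicitly for torus knots with first index $\le 4$. Once you plug in these two Feller citations, your proof is complete and identical to the paper's.
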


\begin{proof}
We begin with the first case.  Let $K = aT(2,10r+1)\cs-bT(3,6r+1)$. First, we compute $\tau(K)$ and $\U_K(1)$. (To compute $\U_K(1)$, we use~\cite[Proposition 28]{feller2}.)
$$ \tau(K) = 5ar - 6br.$$
$$\U_K(1) = -5ar + 4br.$$

Both of these expressions provide a lower bound for $g_4(K)$. Now we construct surfaces to realize these lower bounds. From work of Feller~\cite[Proposition 22]{feller2}, we know that the fiber surface for $T(3,m)$ contains a fiber surface for $T(2,n)$ as long as $n\leq \frac{5m-1}{3}$. Thus, it follows that the standard Seifert surface for $T(3, 6r+1)$ contains a subsurface which has boundary $T(2, 10r+1)$.

If $a\leq b$, then the standard Seifert surface for $K$ contains a subsurface which has boundary $aT(2, 10r+1) \# -aT(2,10r+1)$. Cut out this subsurface and glue a slice disk in its place. The original Seifert surface had genus $5ar + 6br$, and the subsurface which we removed had genus $10ar$. Therefore our newly formed surface has genus $6br - 5ar$, which equals $ |\tau(K)|$.

On the other hand, suppose that $a\geq b$. Then the standard Seifert surface for $K$ contains a subsurface which has boundary $bT(2, 10r+1) \# -bT(2,10r+1)$. Cutting out this subsurface and gluing in a slice disk, we obtain a surface with genus $5ar - 4br$, which equals $|\U_K(1)|$.

The remaining two cases proceed in exactly the same way, using~\cite[Proposition 28]{feller2} to compute $\U_K(1)$ and using ~\cite[Propositions 22 and 23]{feller2} to construct the desired surfaces. We note that the values of the invariants in each of the  remaining cases are as follows:

Let $K  = aT(2,10r+3)\cs-bT(3,6r+2))$. 
$$\tau(K) = a(5r+1) - b(6r+1).$$
 $$\U_K(1) = -a(5r+1) + b(4r+1).$$
 
Let $K = aT(2,10r+1)\cs-bT(4,4r+1)$.
$$\tau(K) = 5ar - 6br.$$
$$\U_K(1) = -5ar+4br.$$
\end{proof}

\subsection{Open problem} 
Allison Miller informs us that she has determined the four-genus of connected sums $aT(2,q) \# -bT(3,q')$ for small values of $q$ and $q'$.  Her examples include $q = 5$, $q' = 7$, which was given as an open problem in~\cite{livingston1}.  In each of the cases that she resolved, the four-genus was determined by the signature function and $\tau$.   

Peter Feller informs us that he has shown that $$g_4(aT(2,3)\#-bT(3,4))=\max\{3b-a,b,a-2b\},$$ realizing the lower bound that is given by the signature function.

The simplest example that we have found in which signatures and $\tau$ do not suffice is the combination  $K = aT(2,13) \#-  bT(3,4)$.  In this example, for $ a < \tfrac{1}{2} b$, the signature bound is stronger than that provided by $\U_K(t)$.  For $ a > \tfrac{1}{2} b$,   $\U_K(1)$ provides a stronger bound on the four-genus than does either the signature or $\tau$.  In particular, we have the following lower bounds: 

$$ g_4(K) \geq 
\begin{cases}
3b-5a    & \mbox{\ if\ } 0 \leq a < \tfrac{1}{3} b \\
 2b-2a & \mbox{\ if\ }\tfrac{1}{3} b \leq a < \tfrac{1}{2} b \\
 6a-2b & \mbox{\ if\ } a > \tfrac{1}{2} b.
 \end{cases}
   $$

In Figure~\ref{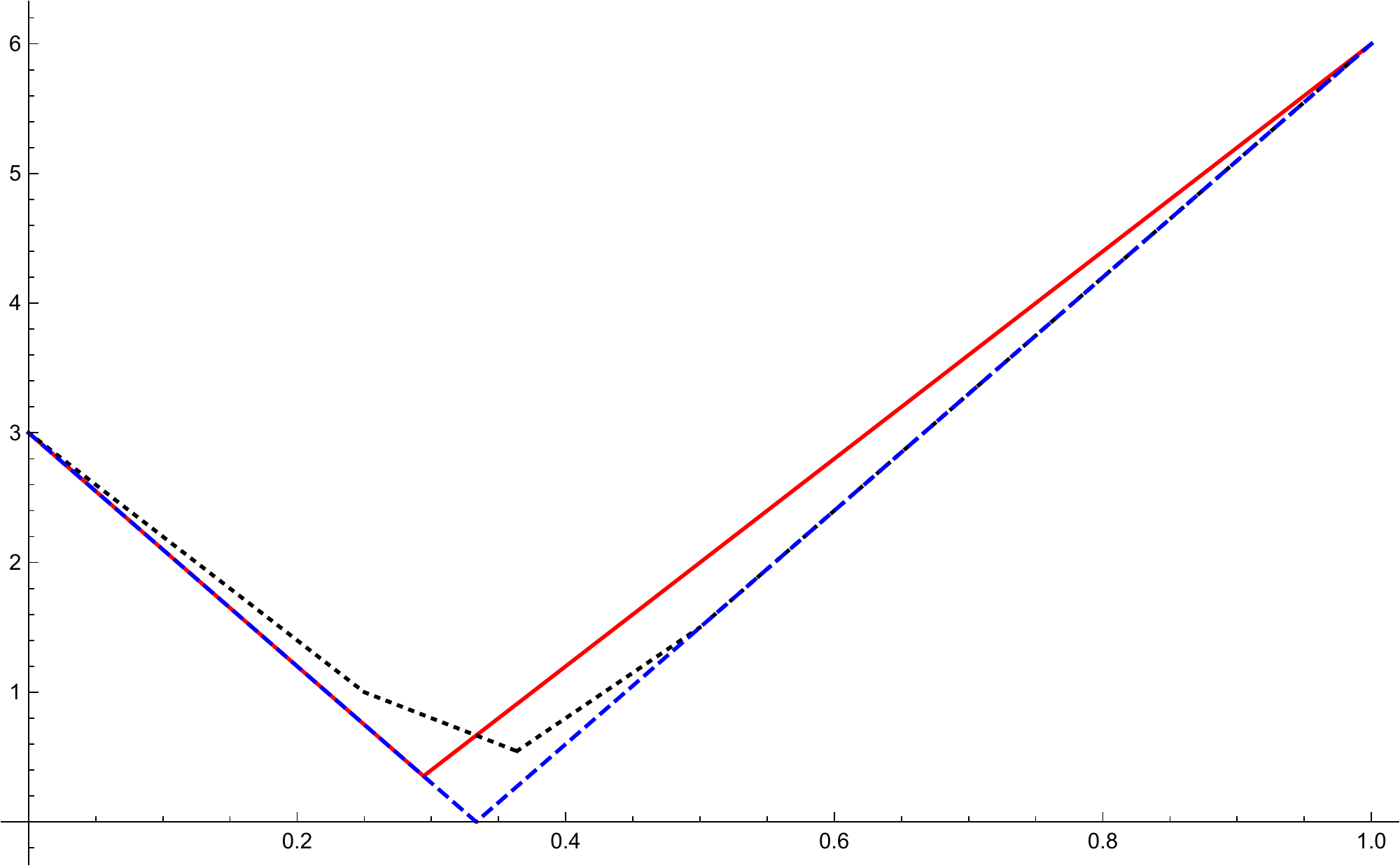}, we graph the relevant bounds in terms of the stable four-genus.  The bound arising from $\U_K(t)$ is represented by the solid (red) graph, the signature bound is represented by the dotted (black) graph, and the bound arising from $\tau(K)$ is represented by the dashed (blue) graph.

\begin{figure}[h]
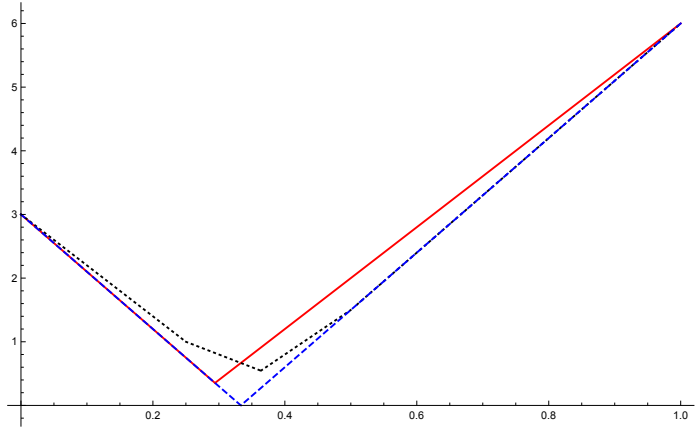

\fig{.45}{T2-13-3-4}  \caption{Lower bounds on the stable four-genus of $K = tT(2,13) \# -(1-t) T(3,4)$ arising from $\U_K(t)$ is represented by the solid (red) graph, the signature bound is represented by the dotted (black) graph, and the bound arising from $\tau(K)$ is represented by the dashed (blue) graph. For $t\leq \tfrac{1}{3}$, signature is the strongest lower bound, while the Upsilon bound is the strongest bound for $t \geq \tfrac{1}{3} $. }\label{T2-13-3-4.pdf}
\end{figure}

We are unable to determine the exact four-genus for all knots in this family using our usual methods of geometric realization. The best we can do is as follows:

$$ g_4(K) \leq
\begin{cases}
3b-4a    & \mbox{\ if\ } 0 \leq a < \tfrac{1}{3} b \\
 2a+b & \mbox{\ if\ }\tfrac{1}{3} b \leq a < \tfrac{1}{2} b \\
 6a-b & \mbox{\ if\ } a \geq \tfrac{1}{2} b.
 \end{cases}
   $$

A similar story repeats itself with other knots of the form $aT(2,k) \cs -bT(3,l)$ or $aT(2,k) \cs -bT(4,l)$ which are not covered by Theorem~\ref{subfamily}. We can determine the four-genus for some values of $a$ and $b$, but not all.  (As mentioned above, Peter Feller recently determined that in the first case, $k=3, l=4$, the lower bound given by the signature function can be realized.)


\appendix
\section{Staircases and the Upsilon function for torus knots}\label{appendix:torusknotU}

In~\cite{borodzik-livingston}, a different perspective on the Upsilon function for the torus knot $T(p,q)$ is discussed. The function is  computed by first creating a so-called staircase and then minimizing an expression over all points in the staircase. This process is summarized in this appendix, and we use this approach to compute the value of $\Upsilon_{T(p,q)}(t)$ up to its second singularity. 

Given an increasing sequence of integers, $\{0=x_0, x_1, \ldots , x_n = N\}$, with $N $ even, we recursively  define a sequence of points in the plane,  $A_i = (a^i_1,a^i_2)$,  $0\le i \le 2n$:\vskip.05in
\begin{itemize}
\item  $A_0 = (a^0_1, a^0_2) =  (0,N/2)$, \vskip.05in

\item $A_{2k+1} =  (a^{2k}_1 + 1, a^{2k}_2),  $ \vskip.05in

\item $A_{2k+2} =  (a^{2k+1}_1 , a^{2k+1}_2 - (x_{k+1} - x_{k} -1)) $. \vskip.05in

\end{itemize}
\vskip.05in 
Here are three examples:

\noindent  $$S_1 = \{0,3,5,6,8\}\rightarrow \{(0,4), (1,4), (1,2), (2,2), (2,1), (3,1), (3,1), (4,1), (4,0)\}.$$
 
 \noindent  $$S_2 = \{ 0,5,6,8\} \rightarrow\{(0,4), (1,4), (1,0), (2,0), (2,0),(3,0), (3,-1)\}.$$ 

\noindent  $$S_2' = \{ 0,3,6,8\}\rightarrow \{(0,4), (1,4), (1,2), (2,2), (2,0), (3,0),(3,-1)\}.$$ 

Sequences of points constructed in this way are called {\em staircases}. Duplicate pairs of points can be deleted from the sequence without affecting the results of our computations, so we do so.  In the previous examples, this yields the three staircases  illustrated in Figure~\ref{fig:staircases}.

\begin{figure}[h]
\fig{.22}{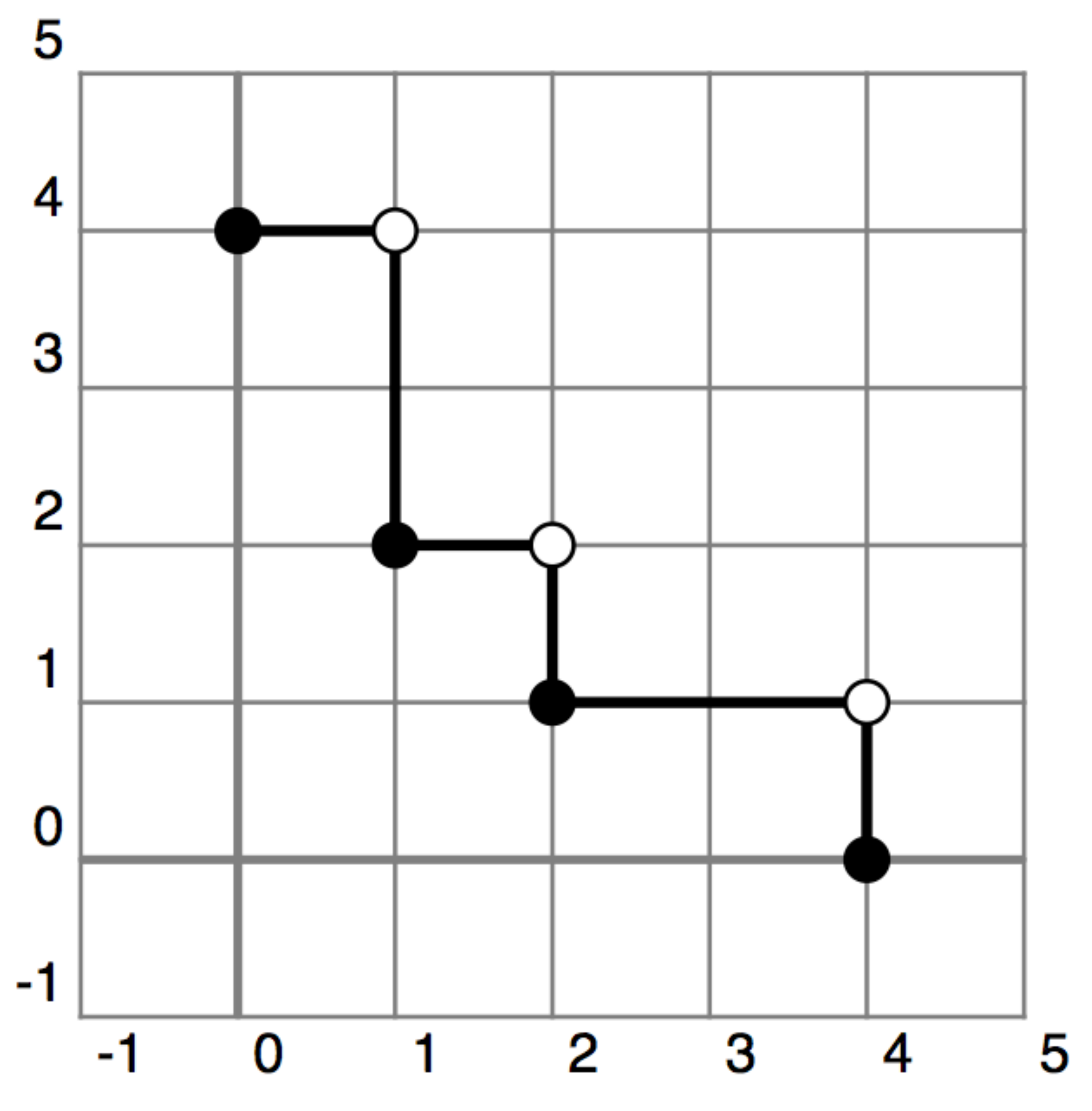} \hskip.05in\fig{.22}{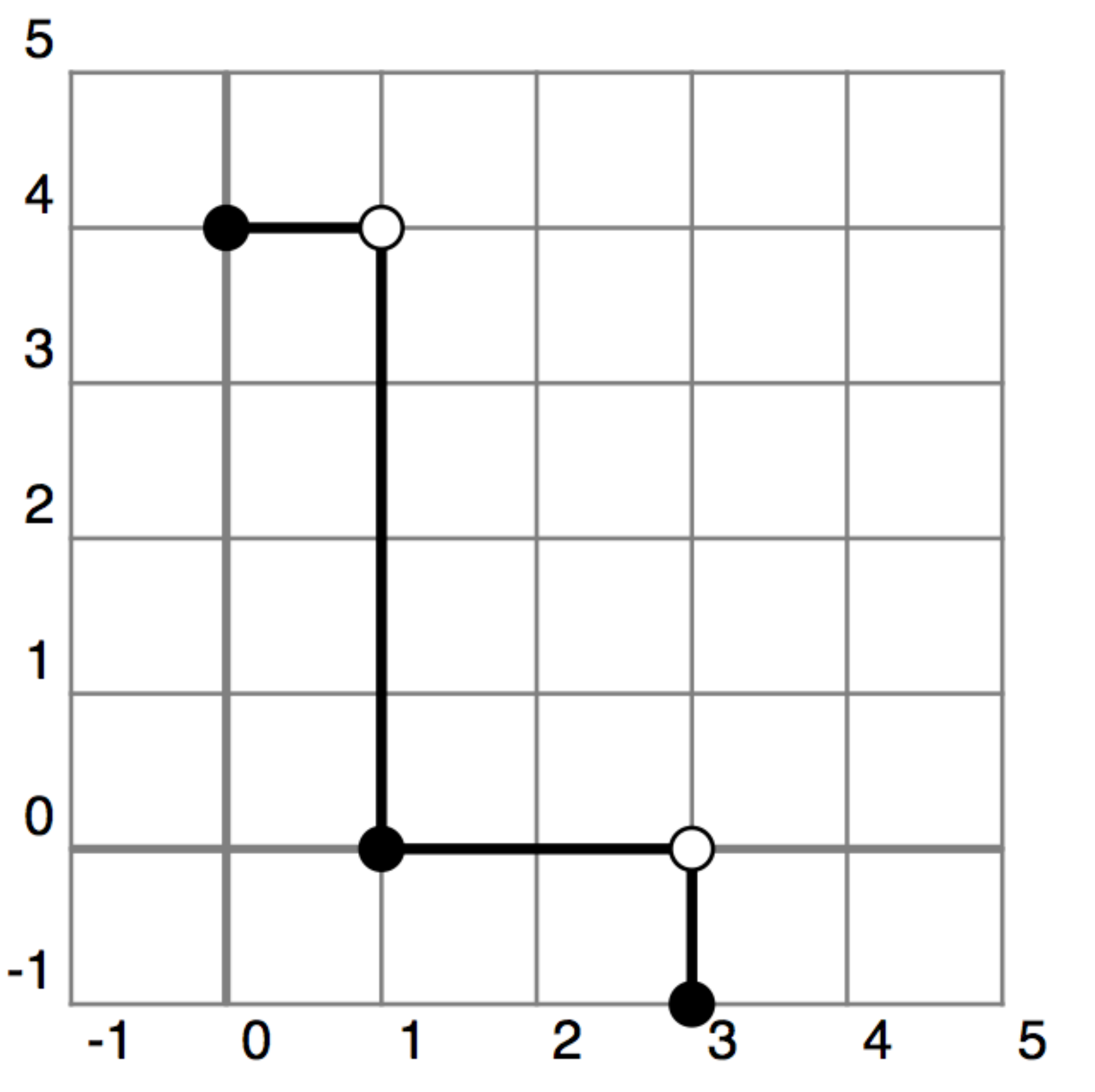} \hskip.05in
\fig{.22}{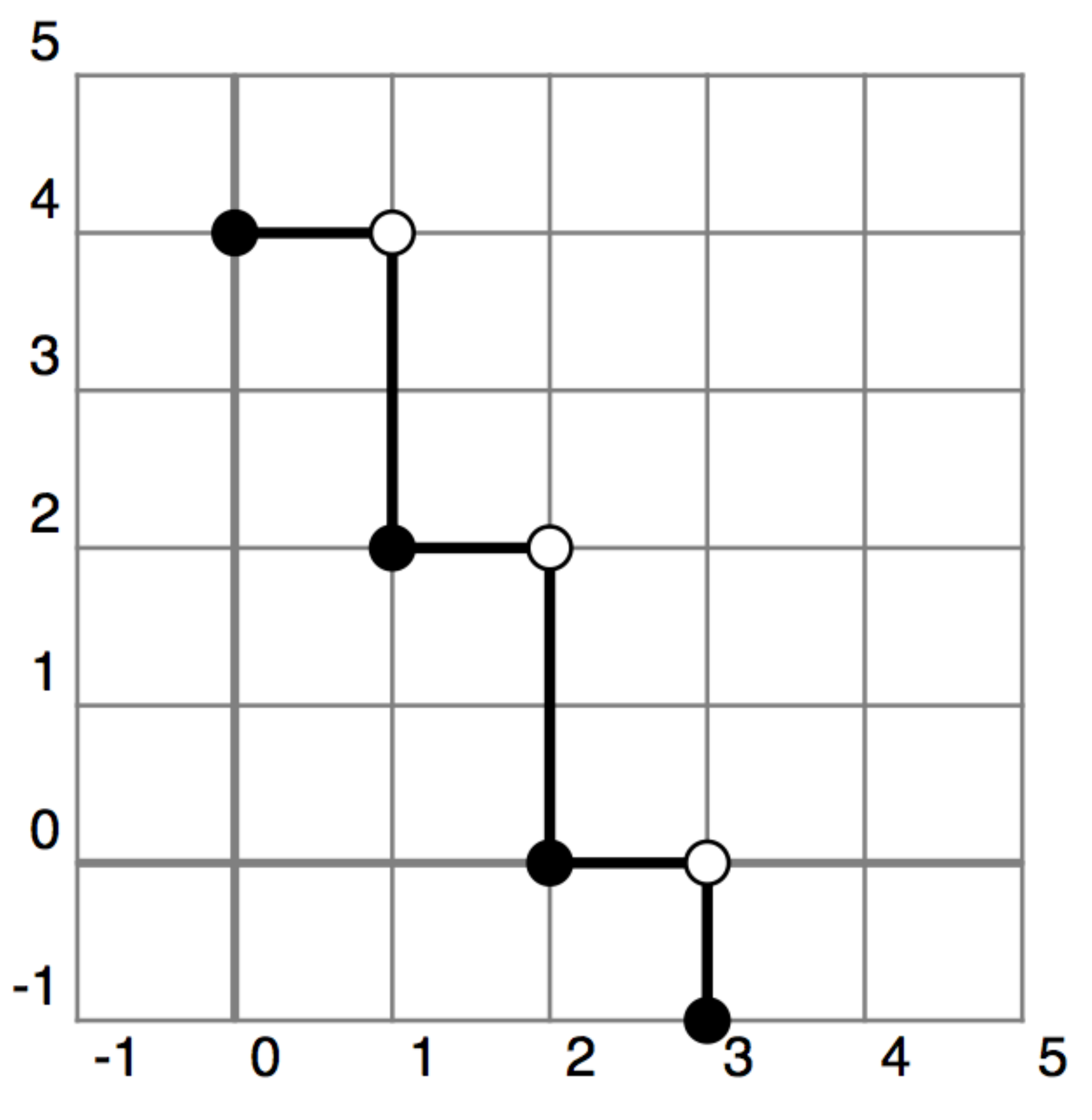}
 \caption{Three examples of staircases }\label{fig:staircases}
\end{figure}

For any staircase as above, we define $$U_S(t) = \min_{k\in \{0,\ldots, n\}} \{(1 - \tfrac{t}{2})a^{2k}_1 +\tfrac{t}{2}a^{2k}_2\}.$$  For any $t \in [0,1]$, $U_S(t)$ can be thought of as the least value of $B$ such that the line $(1- \tfrac{t}{2})x + \tfrac{t}{2}y = B$ contains one of the points $A_{2k}$.

In the examples above, observe that $S_2 \subset S_1$ and $S_2' \subset S_1$. When one sequence is a subsequence of another, the associated functions have the following relationship:

\begin{lemma}\label{contained} Let $S_1$ and $S_2$ be increasing sequences of integers. If $S_2 \subset S_1$, then $U_{S_1}(t) \ge U_{S_2}(t)$ for all $t$.
\end{lemma}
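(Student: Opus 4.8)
The plan is to turn the geometric minimization defining $U_S$ into an explicit arithmetic minimization over the entries of the sequence, and then compare the two sequences entry by entry. First I would solve the recursion for the even-indexed corner points. Writing $S = \{0 = x_0, x_1, \ldots, x_n = N\}$, a direct induction on the defining recursion gives $a^{2k}_1 = k$ and $a^{2k}_2 = N/2 - x_k + k$ for $0 \le k \le n$. Substituting into the linear functional $\ell_t(x,y) = (1-\tfrac{t}{2})x + \tfrac{t}{2}y$ whose minimum over the $A_{2k}$ defines $U_S$, the terms involving $k$ cancel, and I expect to be left with the clean expression
$$\ell_t(A_{2k}) = (1-\tfrac t2)k + \tfrac t2\bigl(\tfrac N2 - x_k + k\bigr) = k + \tfrac t2\bigl(\tfrac N2 - x_k\bigr).$$
Writing $c_S(v)$ for the number of elements of $S$ strictly less than $v$ (so $c_S(x_k)=k$), this reduces the lemma to the reformulation
$$U_S(t) = \min_{v \in S}\Bigl\{\, c_S(v) + \tfrac t2\bigl(\tfrac N2 - v\bigr)\Bigr\}.$$

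With this formula in hand the comparison should be short. Since $S_1$ and $S_2$ are both admissible staircase sequences with $S_2\subset S_1$, they share the endpoints $0$ and $N$, so the constant $N/2$ is common to both expressions. I would let $v^\ast\in S_1$ realize the minimum defining $U_{S_1}(t)$. If $v^\ast\in S_2$, then $c_{S_2}(v^\ast)\le c_{S_1}(v^\ast)$ because $S_2\subset S_1$, and evaluating the $S_2$-expression at $v^\ast$ already gives $U_{S_2}(t)\le c_{S_2}(v^\ast)+\tfrac t2(\tfrac N2-v^\ast)\le U_{S_1}(t)$. If instead $v^\ast\notin S_2$, I would replace $v^\ast$ by the smallest element $w\in S_2$ with $w>v^\ast$; such a $w$ exists because $N\in S_2$ and $v^\ast<N$. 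Since no element of $S_2$ lies strictly between $v^\ast$ and $w$, we get $c_{S_2}(w)=c_{S_2}(v^\ast)\le c_{S_1}(v^\ast)$, while $w>v^\ast$ together with $\tfrac t2\ge 0$ gives $\tfrac t2(\tfrac N2-w)\le \tfrac t2(\tfrac N2-v^\ast)$; adding these two inequalities yields $c_{S_2}(w)+\tfrac t2(\tfrac N2-w)\le U_{S_1}(t)$, and as $w\in S_2$ the left side is at least $U_{S_2}(t)$, so again $U_{S_2}(t)\le U_{S_1}(t)$.

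The one place I expect to require genuine care is precisely this last case, where the minimizer $v^\ast$ of the larger sequence sits at a \emph{deleted} entry, so it cannot be compared directly against $S_2$; the substitution of $v^\ast$ by the next surviving entry $w$ is what resolves it, and it uses only the nonnegativity of $\tfrac t2$ on $[0,1]$. As a safeguard, I would keep in reserve an equivalent inductive route reducing to a single deletion $S_2 = S_1\setminus\{x_m\}$: there the even corner points of $S_2$ coincide with those of $S_1$ for indices below $m$ and are the $(-1,-1)$-translates of the remaining ones, which lowers $\ell_t$ by exactly $1$, while the single extra point $A_{2m}$ of $S_1$ sits directly above a corner point of $S_2$ and hence cannot decrease the minimum. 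Either argument delivers the desired monotonicity $U_{S_1}(t)\ge U_{S_2}(t)$ for all $t$.
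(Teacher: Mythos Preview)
Your argument is correct. Your primary route differs from the paper's: rather than arguing geometrically about staircase points lying above a line and inducting on single insertions, you first solve the recursion to obtain the closed form $\ell_t(A_{2k}) = k + \tfrac{t}{2}(\tfrac{N}{2} - x_k)$ and then compare minima directly via the counting function $c_S$. This arithmetic reformulation makes the comparison completely explicit and avoids the ``details left to the reader'' in the paper's sketch; the only extra ingredient you need is the observation that $S_1$ and $S_2$ share the endpoints $0$ and $N$, which is implicit in the paper's setup. The alternative single-deletion argument you keep in reserve is essentially the paper's own inductive approach, carried out in coordinates: your observation that the later corners of $S_2$ are the $(-1,-1)$-translates of those of $S_1$, together with $A_{2m}$ sitting vertically above $B_{2m}$, is exactly the content of the paper's statement that adding a sequence element keeps all staircase points on or above any supporting line.
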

\begin{proof}  This is proved inductively, observing the effect of adding a single element to the sequence $S_2$.  In general, if all elements of the staircase associated to a sequence lie on or above a line $\mathcal L$, then after adding elements to the sequence, all points on the new staircase will also lie on or above the same line.  Showing this is an elementary exercise. Details are left to the reader.
\end{proof}

According to~\cite{borodzik-livingston}, for the torus knot $T(p,q)$ with $p<q$, the Upsilon function is given by $\U_{T(p,q)}(t) = -2U_{S(p,q)}(t)$, where $S(p,q)$ is the semigroup generated by $p$ and $q$ truncated at $(p-1)(q-1)$.  As an example,   $S(3,5) =  \{0,3,5,6,8\}$.

We have the following theorem about $\U_{T(p,q)}(t)$.

\begin{theorem}
Consider the torus knot $T(p,q)$ where $p<q$. We write $q = kp+d$ where $0< d <p$. The first singularity of $\U_{T(p,q)}(t)$ is at $t_1 = \tfrac{2}{p}$, and the second singularity is at $t_2 = \tfrac{4}{p}$ if $d\leq \tfrac{p}{2}$ and at $t_2 = \tfrac{2}{d}$ if $d\geq \tfrac{p}{2}$.  Moreover, the values of $\U_{T(p,q)}(t)$ on the interval $[0,t_2]$ are as follows:

$$\U_{T(p,q)}(t)  = \begin{cases}
-\tfrac{1}{2}(p-1)(q-1)t & \text{ for all } t\in[0,\tfrac{2}{p}]\\
 - \left[\tfrac{1}{2}(p-1)(q-1) - kp\right]t - 2k & \text{ for all } t\in[\tfrac{2}{p}, t_2]
 \end{cases}
 $$
\end{theorem}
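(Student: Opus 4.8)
The plan is to compute $\U_{T(p,q)}(t)=-2U_{S(p,q)}(t)$ directly from the staircase description, using Lemma~\ref{contained} to reduce to a manageable subsequence of the truncated semigroup $S(p,q)$. The key observation is that $U_S(t)=\min_k\{(1-\tfrac t2)a^{2k}_1+\tfrac t2 a^{2k}_2\}$ is a minimum of linear functions of $t$, hence piecewise linear and concave; its singularities are exactly the values of $t$ where the minimizing vertex $A_{2k}$ changes. So the entire problem reduces to identifying, for each $t\in[0,t_2]$, which even-indexed staircase point $(a^{2k}_1,a^{2k}_2)$ minimizes the linear functional, and tracking the first two transitions.

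First I would record the relevant staircase vertices. The even-indexed points $A_{2k}$ have first coordinate equal to $k$ (the number of semigroup elements used so far, i.e.\ the horizontal progress) and second coordinate $N/2$ minus the accumulated gaps, where $N=(p-1)(q-1)$. The vertex $A_0=(0,N/2)$ gives the functional value $\tfrac t2\cdot\tfrac N2$, which after the factor $-2$ produces exactly $-\tfrac12(p-1)(q-1)t$; this must be the minimizer for small $t$, giving the first branch on $[0,\tfrac2p]$. The first singularity at $t_1=\tfrac2p$ then comes from the competition between $A_0$ and the next relevant vertex. Writing $q=kp+d$, the semigroup $S(p,q)$ contains $0,p,2p,\ldots$ with the first ``gap structure'' determined by $d$; the crucial vertex corresponds to the point where the first multiple $p$ of the generator appears, located at coordinates reflecting the $k$ copies of the generator $q$ skipped. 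I would verify that this vertex yields the functional $(1-\tfrac t2)\cdot p + \tfrac t2\cdot(\tfrac N2 - kp + \dots)$ whose crossing with the $A_0$ branch occurs precisely at $t=\tfrac2p$, and that beyond $\tfrac2p$ it produces the claimed slope $-[\tfrac12(p-1)(q-1)-kp]$ and intercept $-2k$.

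For the second singularity, I would compare the slope of the second branch against the functionals coming from the \emph{next} candidate vertices. The two cases $d\le\tfrac p2$ and $d\ge\tfrac p2$ arise because the identity of the third minimizing vertex depends on whether the next semigroup element to enter is $2p$ or $2q-\text{(something)}$, equivalently whether doubling the generator-$p$ step or advancing in the $d$-direction comes first. I expect the $t_2=\tfrac4p$ case to come from the vertex at horizontal coordinate $2p$-worth of progress (doubling the first transition), and the $t_2=\tfrac2d$ case from a vertex governed by the residue $d$; checking which crossing happens at the smaller value of $t$ gives the dichotomy.

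The main obstacle will be the bookkeeping that pins down exactly which staircase vertices are candidate minimizers, i.e.\ proving that the two explicitly identified vertices (for the two branches) really do achieve the minimum over \emph{all} $k$ on their respective subintervals, not merely beat the one neighbor I compared against. Concavity of $U_S$ helps—once the minimizer is a vertex of the lower envelope, adjacent linear pieces suffice to localize transitions—but I still need to rule out that some far-away vertex dips below. The cleanest route is to apply Lemma~\ref{contained}: exhibit an explicit short subsequence $S_0\subset S(p,q)$ containing only the few relevant semigroup elements (namely $0$, the first few multiples of $p$, and the element realizing the residue $d$), compute $U_{S_0}(t)$ exactly on $[0,t_2]$, and then argue the added elements of $S(p,q)$ lie on or above the relevant supporting lines so that $U_{S(p,q)}(t)=U_{S_0}(t)$ on this interval. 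Verifying this last containment—that no truncated-semigroup element forces a lower vertex before $t_2$—is where the genuine combinatorial content lies, and it is where I would concentrate the careful argument.
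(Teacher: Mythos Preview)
Your strategy is exactly the paper's: pass to a subsequence of $S(p,q)$ via Lemma~\ref{contained}, compute $U$ for that subsequence on a small initial interval, and then check that the extra semigroup elements (all exceeding $(k+1)p$) only affect staircase vertices too far to the right to matter for $t\le t_2$.

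One concrete correction, though, since getting this wrong will derail the calculation. The vertex producing the \emph{second} linear branch is not the one corresponding to the first multiple $p$; it is the vertex $A_{2k}$ corresponding to the element $kp$. Writing $A_{2m}=(m,\tfrac{N}{2}-x_m+m)$, the functional at $A_{2m}$ is $m+\tfrac{t}{2}(\tfrac{N}{2}-x_m)$, so all of $A_0,A_2,\ldots,A_{2k}$ (with $x_m=mp$) tie at $t=\tfrac{2}{p}$, and immediately past that point the one with largest $x_m$, namely $A_{2k}$, wins. This is what gives the slope $-[\tfrac{1}{2}(p-1)(q-1)-kp]$ and intercept $-2k$; the vertex at $p$ would give intercept $-2$, which is only correct when $k=1$. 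The second singularity then comes from comparing $A_{2k}$ against $A_{2(k+1)}$ (for $x_{k+1}=q$, crossing at $t=\tfrac{2}{d}$) and $A_{2(k+2)}$ (for $x_{k+2}=(k+1)p$, crossing at $t=\tfrac{4}{p}$), which yields the stated dichotomy. Your expression $(1-\tfrac{t}{2})\cdot p+\cdots$ has the wrong first coordinate; the horizontal coordinate is the \emph{index} in the sequence, not the semigroup value.
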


\vspace{.3cm}
\begin{proof} The sequence  $S(p,q)$ contains a subsequence:
$$S= \{0, p, 2p, \ldots, kp, q = kp+d , (k+1)p, (k+1)p+d, (k+2)p, (k+2)p + d, \cdots\}.$$

This sequence is regular enough that the process of constructing the corresponding staircase and computing the function $U_S(t)$ is fairly straightforward.  The stairs in the staircase have height $p-1$ for the first several stairs, and then the heights later alternate between $d-1$ and $p-d-1$. The functional values and singularities of $U_{S}(t)$ are identical to those described for $\U$ in the statement of the theorem.

The sequence $S(p,q)$ is constructed from $S$ by including elements, all greater than $(k+1)p$. As we observed in Lemma~\ref{contained}, since $S\subset S(p,q)$, it follows that $U_{S(p,q)}(t)\geq U_S(t)$. A bit more care shows that since the elements we are adding to sequence are greater than $(k+1)p$, the function $U_S(t)$ is not affected for small $t$.  Details are left to the reader.
\end{proof}



\end{document}